\documentclass[reqno,english]{amsart}
\usepackage{amsfonts,amsmath,latexsym,verbatim,amscd,mathrsfs,color,array}

\usepackage{amsmath,amssymb,amsthm,amsfonts,graphicx,color}
\usepackage{amssymb}
\usepackage{pdfsync}
\usepackage{epstopdf}
\usepackage[colorlinks=true]{hyperref}
\usepackage{subfigure}

\makeatletter
\def\@currentlabel{2.1}\label{e:dispaa}
\def\@currentlabel{2.21}\label{e:dispau}
\def\@currentlabel{2.22}\label{e:dispav}
\def\@currentlabel{2.23}\label{e:dispaw}
\def\@currentlabel{2.24}\label{e:dispax}
\def\theequation{\thesection.\@arabic\c@equation}
\makeatother
\let\oldbibliography\thebibliography
\renewcommand{\thebibliography}[1]{%
\oldbibliography{#1}%
\setlength{\itemsep}{0pt}%
}

\oddsidemargin 0.25in

\evensidemargin 0 cm

\marginparsep 0pt

\topmargin -0.1in

\textheight 22.0 cm

\textwidth 15 cm

\renewcommand{\theequation}{\thesection.\arabic{equation}}
\newtheorem{lemma}{Lemma}[section]

\newtheorem{definition}{Definition}

\newtheorem{proposition}{Proposition}[section]
\newtheorem{corollary}{Corollary}[section]
\newtheorem{remark}{Remark}[section]

\newtheorem{open problem}{Open Problem}[section]
\newtheorem{open question}{Open Quesion}[section]
\newcommand{\bremark}{\begin{remark} \em}
\newcommand{\eremark}{\end{remark} }

\newtheorem{numerical/experimental results}{Numerical/Experimental results}[section]
\newtheorem*{thmA}{Theorem A}
\newtheorem*{thmB}{Theorem B}

\newtheorem{theorem}{Theorem}[section]

\newcommand{\BE}{\begin{equation}}
\newcommand{\BEN}{\begin{equation*}}
\newcommand{\EE}{\end{equation}}
\newcommand{\EEN}{\end{equation*}}
\newcommand{\BL}{\begin{lemma}}
\newcommand{\EL}{\end{lemma}}
\newcommand{\BT}{\begin{theorem}}
\newcommand{\ET}{\end{theorem}}
\newcommand{\BP}{\begin{proposition}}
\newcommand{\EP}{\end{proposition}}
\newcommand{\BC}{\begin{corollary}}
\newcommand{\EC}{\end{corollary}}

\renewcommand{\Im}{\operatorname{Im}}

\usepackage{amsmath}

\DeclareMathOperator*{\argmin}{arg\,min}

\begin{document}


\title[Signs of high order derivatives]{\bf signs of high order derivatives for the theta and Epstein zeta functions and application}

\author{Kaixin Deng}

\author{Senping Luo}

\address[K.~Deng]{School of Mathematics and statistics, Jiangxi Normal University, Nanchang, 330022, China}
\address[S.~Luo]{School of Mathematics and statistics, Jiangxi Normal University, Nanchang, 330022, China}

\email[S.~Luo]{luosp1989@163.com}

\email[K.~Deng]{Dengkaikai1999@126.com}

\begin{abstract}
In the 1950s, 1960s and 1988, number theorists Rankin \cite{Ran1953}, Cassels \cite{Cas1959}, Ennola \cite{Enn1964a}, Diananda \cite{Dia1964},
and Montgomery \cite{Mon1988} derived the signs of first order derivatives of Epstein zeta and theta functions, respectively. In this note, we shall derive the signs of higher order derivatives of such functions. Application to lattice minimization problems will be given.
\end{abstract}

\maketitle


\section{Introduction and Statement of Main Results}

 The Epstein zeta and theta functions associated with the lattice $ \Lambda$ are defined as follows:
 \begin{equation}\aligned
 \nonumber
\zeta(s;\Lambda):=\sum_{\mathbb{P}\in \Lambda\setminus\{0\}}\frac{1}{|\mathbb{P}|^{2s}}, \; s>1;\;\;\theta(\alpha;\Lambda):=\sum_{\mathbb{P}\in\Lambda} e^{-\pi\alpha |\mathbb{P}|^2},\; \alpha>0.
\endaligned\end{equation}
These functions coincide with the $f-$potential lattice energy
 \begin{equation}\aligned\label{EFL}
E_f(\Lambda):=\sum_{\mathbb{P}\in \Lambda \backslash\{0\}} f(|\mathbb{P}|^2),\;\; |\cdot|\;\hbox{is the Euclidean norm on}\;\mathbb{R}^2
\endaligned\end{equation}
for
  \begin{equation}\aligned\label{EFL000}
 f(r^2)=\frac{1}{r^{2s}},\;s>1 \;\;\hbox{and}\;\;f(r^2)=e^{- \pi\alpha r^2},\;\alpha>0,
 \endaligned\end{equation}
which represent the Riesz and Gaussian potentials, respectively (see \cite{Bet2016}).

Let $ z\in \mathbb{H}:=\{z= x+ i y\in\mathbb{C}: y>0\}$ and  $\Lambda :=\sqrt{\frac{1}{\Im(z)}}\Big({\mathbb Z}\oplus z{\mathbb Z}\Big)$  be a lattice in $ \mathbb{R}^2$  with unit cell area and parameterized by $z$.  By the parametrization $\Lambda =\sqrt{\frac{1}{\Im(z)}}\Big({\mathbb Z}\oplus z{\mathbb Z}\Big)$, we have

 \begin{equation}\aligned
 \label{thetas}
\zeta(s;z):=&\zeta(s;\Lambda)=\sum_{(m,n)\in\mathbb{Z}^2\backslash\{0\}}\frac{y^s}{|mz+n|^{2s}},\; s>1;\\
\theta(\alpha;z):=&\theta(\alpha;\Lambda)=\sum_{(m,n)\in\mathbb{Z}^2}e^{-\pi \alpha\frac{|mz+n|^{2}}{y}},\; \alpha>0.
\endaligned\end{equation}

The functionals $\theta(\alpha;z)$ and $\zeta(s;z)$ exhibit invariance under the action of a certain group. The generators of the group are given by
\begin{equation}\aligned\label{GroupG1}
\mathcal{G}: \hbox{the group generated by} \;\;\tau\mapsto -\frac{1}{\tau},\;\; \tau\mapsto \tau+1,\;\;\tau\mapsto -\overline{\tau}.
\endaligned\end{equation}
Therefore,
\begin{equation}\aligned\label{Gabc}
\theta(\alpha;\gamma(z))=\theta(\alpha;z),\;\;
\zeta(\alpha;\gamma(z))=\zeta(\alpha;z)\;\;\hbox{for any}\;\;\gamma\in\mathcal{G},\;\hbox{provided}\;\alpha>0\;\hbox{and}\;s>1.
\endaligned\end{equation}
Within the invariance in \eqref{Gabc}, one can reduce the consideration of domain from the upper half plane $\mathbb{H}$ to their fundamental domain.
\begin{definition}[\cite{Apo1976,Eva1973}]\label{Def}
The fundamental domain associated to group $G$ is a connected domain $\mathcal{D}$ that satisfies
\begin{itemize}
  \item For any $z\in\mathbb{H}$, there exists an element $\pi\in G$ such that $\pi(z)\in\overline{\mathcal{D}}$;
  \item Suppose $z_1,z_2\in\mathcal{D}$ and $\pi(z_1)=z_2$ for some $\pi\in G$, then $z_1=z_2$ and $\pi=\pm Id$.
\end{itemize}
\end{definition}

By Definition \ref{Def}, the fundamental domain associated to modular group $\mathcal{G}$ is
\begin{equation}\aligned\label{D}
\mathcal{D}_{\mathcal{G}}:=\{
z\in\mathbb{H}: |z|>1,\; 0<x<\frac{1}{2}
\}.
\endaligned\end{equation}

Recent years, it has been revealed that the functionals $\theta(\alpha;z)$ and $\zeta(s;z)$ are deeply connected to a long-standing open crystal problem: understanding the fundamental mechanisms behind the spontaneous arrangement of atoms into periodic configurations at low temperatures (Radin \cite{Radin1987}). This famous problem, called as the Crystallization Conjecture, was proposed by Radin in 1987, a comprehensive review of this conjecture can be found in Blanc-Lewin \cite{Blanc2015}. For applications of $\theta(\alpha;z)$ and $\zeta(s,z)$ to the crystal problems,
see B\'etermin and his collaborators \cite{Bet2015}-\cite{Betermin2021LMP}, as well as Wei and his collaborators \cite{Luo2019}-\cite{Luo_arxiv_2021}.

In the 1950s and 1960s, number theorists Rankin \cite{Ran1953}, Cassels \cite{Cas1959}, Ennola \cite{Enn1964a,Enn1964b}, and Diananda \cite{Dia1964} established the following result:
\begin{thmA}[Rankin, Cassels, Ennola, Diananda, 1950s and 1960s] For $s>1$, up to the action by the modular group,
$$
\argmin_{z\in\mathbb{H}}\zeta(s;z)=e^{i\frac{\pi}{3}}.
$$
\end{thmA}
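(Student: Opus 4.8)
\noindent\emph{Proof strategy.} By the invariance \eqref{Gabc} it suffices to show that $z\mapsto\zeta(s;z)$ attains its minimum over the closed fundamental domain $\overline{\mathcal{D}_{\mathcal{G}}}$ (see \eqref{D}) at the corner $\rho:=e^{i\pi/3}=\tfrac12+i\tfrac{\sqrt3}{2}$, and only there. A minimizer exists because $\zeta(s;\cdot)$ is continuous and positive on $\mathbb{H}$ while, retaining only the terms with $m=0$ in \eqref{thetas}, $\zeta(s;x+iy)\ge 2\zeta(2s)\,y^s\to\infty$ as $y\to\infty$, so the infimum over $\overline{\mathcal{D}_{\mathcal{G}}}$ is attained on a compact set. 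The plan is then to push the minimizer to the boundary edge $L:=\{\tfrac12+iy:\ y\ge\tfrac{\sqrt3}{2}\}$ by an $x$-derivative sign, and then to the corner $\rho$ by a sign/convexity argument along $L$. (Granting Montgomery's theorem $\theta(\alpha;z)\ge\theta(\alpha;\rho)$ for all $\alpha>0$, the conclusion is immediate from $\pi^{-s}\Gamma(s)\zeta(s;z)=\int_0^\infty(\theta(t;z)-1)\,t^{s-1}\,dt$ and positivity of the measure $t^{s-1}\,dt$; below I outline a route in the derivative-sign spirit of this paper.)

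\noindent\textbf{Step 1 (monotonicity in $x$).} Differentiating \eqref{thetas},
$$
\partial_x\zeta(s;z)=-2s\sum_{(m,n)\in\mathbb{Z}^2\setminus\{0\}}\frac{y^s\,m(mx+n)}{\big((mx+n)^2+m^2y^2\big)^{s+1}} .
$$
I would prove $\partial_x\zeta(s;z)<0$ on the strip $0<x<\tfrac12$, $y>0$, by passing to the Gaussian side through $\pi^{-s}\Gamma(s)\zeta(s;z)=\int_0^\infty(\theta(t;z)-1)\,t^{s-1}\,dt$ and deducing it from $\partial_x\theta(t;z)<0$ on the same strip for every $t>0$ (Montgomery); the latter follows, after the decomposition $\theta(t;x+iy)=\sum_{m}e^{-\pi tm^2y}\,\vartheta_{mx}$ with $\vartheta_v=\sum_n e^{-\pi t(n+v)^2/y}$, from dominance of the $m=1$ block, whose one-variable input is that $v\mapsto\vartheta_v$ is strictly decreasing on $(0,\tfrac12)$. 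Once $\partial_x\zeta<0$ is known, for any $z_0\in\overline{\mathcal{D}_{\mathcal{G}}}$ with $\Re z_0<\tfrac12$ the horizontal segment joining $z_0$ to $\tfrac12+i\,\Im z_0$ stays inside $\overline{\mathcal{D}_{\mathcal{G}}}$ (since $x^2+(\Im z_0)^2\ge|z_0|^2\ge1$ for all $x\ge\Re z_0\ge0$), and $\zeta(s;\cdot)$ is strictly decreasing along it; hence the minimum over $\overline{\mathcal{D}_{\mathcal{G}}}$ is attained on $L$.

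\noindent\textbf{Step 2 (behaviour on the edge $L$ and conclusion).} Write $h(y):=\zeta(s;\tfrac12+iy)$. First, $h'(\tfrac{\sqrt3}{2})=0$: the element $z\mapsto -1/(z-1)$ of $\mathcal{G}$ fixes $\rho$ and acts on the tangent space at $\rho$ by rotation through $2\pi/3$, so by \eqref{Gabc} the gradient $\nabla\zeta(s;\rho)$ must vanish. It remains to show $h'(y)>0$ for $y>\tfrac{\sqrt3}{2}$, for which I would establish the second-derivative sign $h''(y)=\partial_y^2\zeta(s;\tfrac12+iy)>0$ for all $y\ge\tfrac{\sqrt3}{2}$ by the high-order derivative sign estimates of this paper; the resulting convexity of $h$, together with $h'(\tfrac{\sqrt3}{2})=0$, forces $h'>0$ on $(\tfrac{\sqrt3}{2},\infty)$, so $h$ is minimized exactly at $y=\tfrac{\sqrt3}{2}$, i.e. at $z=\rho$. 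Combining Steps 1 and 2, $\zeta(s;z)\ge\zeta(s;\rho)$ on $\overline{\mathcal{D}_{\mathcal{G}}}$ with equality forcing $\Re z=\tfrac12$ and $\Im z=\tfrac{\sqrt3}{2}$; pulling back by $\mathcal{G}$ gives the theorem and the uniqueness of the minimizer up to the modular group.

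\noindent\textbf{Expected main obstacle.} The genuinely hard point is Step 2, specifically the positivity of $h''$ on $L$ up to and including the corner $\rho$: there the short-vector structure degenerates, since the three families $(0,\pm1)$, $(\pm1,0)$, $(\pm1,\mp1)$ all attain the minimal length simultaneously at $\rho$, so the sign of $h''$ is a balance between competing contributions rather than a termwise matter. This is exactly the place where the classical treatments of Rankin, Cassels, Ennola and Diananda resorted to a direct, $s$-dependent comparison of the hexagonal and square values, and where uniform control of the signs of higher derivatives --- the theme of the present paper --- is what should make a clean, $s$-uniform argument possible.
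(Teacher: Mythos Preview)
The paper does not itself prove Theorem~A; it is quoted as a classical result, and the only outline offered is Proposition~1.1, which records the \emph{first}-order derivative signs $\partial_x\zeta<0$ (for $0<x<\tfrac12$, $y\ge\tfrac35$) and $\partial_y\zeta>0$ (for $y\ge\tfrac32$). Your Step~1 matches the first of these, though you overstate its range: Montgomery's inequality $\partial_x\theta\le0$ is stated only for $y\ge\tfrac12$ (Proposition~1.1(4)), and the zeta version only for $y\ge\tfrac35$, not all $y>0$; for the application to $\overline{\mathcal D_{\mathcal G}}$ this is harmless. Your parenthetical route---deduce Theorem~A from Theorem~B via $\pi^{-s}\Gamma(s)\zeta(s;z)=\int_0^\infty(\theta(\alpha;z)-1)\alpha^{s-1}\,d\alpha$---is precisely how the paper transfers all its sign results from $\theta$ to $\zeta$ (see \eqref{Th1zeta1}--\eqref{Th1zeta2}), and is the cleanest way to close the argument once Theorem~B is granted.

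Your Step~2, however, contains a genuine gap. You claim $h''(y)=\partial_y^2\zeta(s;\tfrac12+iy)>0$ on $L$ and appeal to ``the high-order derivative sign estimates of this paper'', but Theorem~\ref{Th1} establishes signs only for the \emph{mixed} derivatives $\partial_{xy}^2\zeta$ and $\partial_{xy^2}^3\zeta$, never for $\partial_{yy}^2\zeta$. In fact, by the symmetry $x\mapsto1-x$ (which lies in $\mathcal G$), $\partial_x\zeta$ vanishes identically on the line $x=\tfrac12$, so $\partial_{xy}^2\zeta$ and $\partial_{xy^2}^3\zeta$ are zero there as well; the paper's inequalities are stated on the \emph{open} region $0<x<\tfrac12$ and degenerate on $L$, giving no information about the pure $y$-derivatives you need. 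The classical proofs do not proceed via convexity of $h$ either: they use $\partial_y\zeta>0$ directly, but only in the easy range $y\ge\tfrac32$ (Proposition~1.1(1)), and then treat the remaining interval $[\tfrac{\sqrt3}{2},\tfrac32]$ by ad hoc comparisons---exactly the delicate region you correctly flag. Your convexity claim may well be true, but it is neither proved in the paper nor a consequence of what the paper proves.
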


Motivated by {\bf Theorem A}, in 1988,
Montgomery \cite{Mon1988} further proved:
\begin{thmB}[Montgomery] For $\alpha>0$, up to the action by the modular group,
$$
\argmin_{z\in\mathbb{H}}\theta(\alpha;z)=e^{i\frac{\pi}{3}}.
$$
\end{thmB}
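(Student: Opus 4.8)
The plan is to combine the invariance \eqref{Gabc} with a reduction of the two–dimensional minimization to a one–dimensional one, effected by monotonicity of \(\theta(\alpha;\cdot)\) in the \(x\)– and \(y\)–directions on the fundamental domain \eqref{D}. \textit{Step 1 (reduction and Fourier expansion).} By \eqref{Gabc} it suffices to minimize \(z\mapsto\theta(\alpha;z)\) over the closure \(\overline{\mathcal D_{\mathcal G}}=\{x+iy:|z|\ge1,\ 0\le x\le\frac12\}\). Using \(|mz+n|^{2}=(mx+n)^{2}+m^{2}y^{2}\) and Gaussian (Poisson) summation in \(n\) for each fixed \(m\), \eqref{thetas} becomes
\[
\theta(\alpha;x+iy)=\sqrt{\tfrac{y}{\alpha}}\sum_{m,k\in\mathbb Z}e^{-\pi y(\alpha m^{2}+k^{2}/\alpha)}\cos(2\pi mkx);
\]
the \(k=0\) part alone tends to \(+\infty\) as \(y\to\infty\), so the minimum over \(\overline{\mathcal D_{\mathcal G}}\) is attained.

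\textit{Step 2 (monotonicity in \(x\)).} Differentiating and collecting terms by \(j=mk\),
\[
\frac{\partial}{\partial x}\theta(\alpha;x+iy)=-8\pi\sqrt{\tfrac{y}{\alpha}}\sum_{j\ge1}jC_{j}(\alpha,y)\sin(2\pi jx),\qquad C_{j}(\alpha,y):=\sum_{\substack{m,k\ge1\\ mk=j}}e^{-\pi y(\alpha m^{2}+k^{2}/\alpha)}>0 .
\]
I would show the right–hand side is \(\le0\) for \(0\le x\le\frac12\): near the endpoints \(x=0,\frac12\) one expands \(\sin\) to first order, so the sign is governed by \(\sum_jj^{2}C_j>0\) and by \(\sum_j(-1)^{j+1}j^{2}C_j\) (positive for all \(\alpha\) by the classical positivity of \(\theta_4'\)), while away from the endpoints the positive leading term \(C_1\sin(2\pi x)\) dominates the tail \(\sum_{j\ge2}jC_j\) by the rapid decay of \(C_j\) in \(j\). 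Thus \(\theta(\alpha;\cdot)\) is non–increasing in \(x\) on \([0,\frac12]\), so on every horizontal segment inside \(\overline{\mathcal D_{\mathcal G}}\) the minimum sits at the right endpoint, on the line \(x=\frac12\).

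\textit{Step 3 (the line \(x=\frac12\) and the arc).} Splitting the \(m\)–sum by the parity of \(m\) and shifting the inner index collapses the restriction to \(x=\frac12\) into a product of classical Jacobi theta functions,
\[
\theta\!\left(\alpha;\tfrac12+iy\right)=\theta_{3}\!\left(\tfrac{\alpha}{y}\right)\theta_{3}(4\alpha y)+\theta_{2}\!\left(\tfrac{\alpha}{y}\right)\theta_{2}(4\alpha y),\quad \theta_{3}(t)=\sum_{n}e^{-\pi tn^{2}},\ \ \theta_{2}(t)=\sum_{n}e^{-\pi t(n+\frac12)^{2}}.
\]
Since \(e^{i\pi/3}=\frac12+i\frac{\sqrt3}{2}\) has nontrivial stabilizer in \(\mathcal G\) (a rotation of order \(3\)), the gradient of the invariant function \(\theta(\alpha;\cdot)\) vanishes there for every \(\alpha>0\), so \(y=\frac{\sqrt3}{2}\) is a critical point of \(y\mapsto\theta(\alpha;\frac12+iy)\). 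I would prove moreover that \(\frac{\partial}{\partial y}\theta(\alpha;\frac12+iy)\ge0\) for \(y\ge\frac{\sqrt3}{2}\), by differentiating the product formula and using Jacobi's differential relations for \(\theta_2,\theta_3\) (equivalently, monotonicity of the ratio \(\theta_2/\theta_3\)) to reduce matters to an inequality uniform in \(\alpha\). The boundary arc \(\{|z|=1\}\cap\overline{\mathcal D_{\mathcal G}}\) is handled in the same spirit, simplified by the fact that on \(|z|=1\) the involution \(z\mapsto-1/z\) gives \(\theta(\alpha;e^{i\phi})=\frac1\alpha\theta(1/\alpha;e^{i\phi})\).

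\textit{Step 4 (conclusion and main obstacle).} Since \(y\ge\frac{\sqrt3}{2}\) throughout \(\overline{\mathcal D_{\mathcal G}}\), from any \(z\) one may move horizontally to \(\frac12+iy\) and then down the line \(x=\frac12\) to \(e^{i\pi/3}\); by Steps 2 and 3 the value of \(\theta(\alpha;\cdot)\) does not increase along either leg, whence \(\theta(\alpha;z)\ge\theta(\alpha;e^{i\pi/3})\), with equality only at \(z=e^{i\pi/3}\) — i.e.\ {\bf Theorem B}. (It is in fact stronger than {\bf Theorem A}: integrating \(\theta(\alpha;z)-1\ge\theta(\alpha;e^{i\pi/3})-1\) against \(\alpha^{s-1}d\alpha\) and using \(\Gamma(s)\pi^{-s}\zeta(s;z)=\int_0^{\infty}(\theta(\alpha;z)-1)\alpha^{s-1}d\alpha\) recovers the result of Rankin, Cassels, Ennola and Diananda.) The crux is Step 3 — determining the sign of \(\frac{\partial}{\partial y}\theta(\alpha;\frac12+iy)\), and of the analogous derivative along the arc, \emph{uniformly for all \(\alpha\in(0,\infty)\)}: for \(\alpha\) very small or very large the \(x\)–dependence of \(\theta\) is exponentially flat, so this derivative is governed by competing exponentially small quantities whose sign cannot be extracted from crude term–by–term bounds, and genuine theta–function identities are required.
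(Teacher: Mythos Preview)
Your overall strategy coincides with Montgomery's, which is all the paper itself invokes: the paper does not reprove Theorem~B but records the derivative signs as Proposition~1.1 and cites \cite{Mon1988}. The scheme is to reduce to $\overline{\mathcal D_{\mathcal G}}$ by \eqref{Gabc}, show $\partial_x\theta\le0$ on horizontal segments, show $\partial_y\theta\ge0$, and slide to $e^{i\pi/3}$. Montgomery in fact proves the stronger statement $\partial_y\theta\ge0$ on all of $\overline{\mathcal D_{\mathcal G}}$ (Proposition~1.1(3)) via the exponential expansion of Lemma~\ref{3.1lem1}; you propose to prove it only on the line $x=\tfrac12$, which does suffice for the path in your Step~4.

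There are, however, two genuine gaps. In Step~2, the claimed positivity of $\sum_{j\ge1}(-1)^{j+1}j^{2}C_j$ is not ``classical positivity of $\theta_4'$'': because $C_j=\sum_{mk=j}e^{-\pi y(\alpha m^{2}+k^{2}/\alpha)}$ is a divisor sum, the alternating series does not factor as a single $\theta_4$-derivative, and the sign near $x=\tfrac12$ needs a separate argument. More seriously, your proposed attack on Step~3 via the product formula does not obviously close: differentiating $\theta_3(\alpha/y)\theta_3(4\alpha y)+\theta_2(\alpha/y)\theta_2(4\alpha y)$ in $y$ produces four terms of opposite signs, and neither ``Jacobi's differential relations'' nor ``monotonicity of $\theta_2/\theta_3$'' is shown to control the resulting combination uniformly in $\alpha$. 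Montgomery's route is different and is precisely the machinery of Section~2 here: one writes $\theta(\alpha;z)=\sqrt{y/\alpha}\sum_{n}e^{-\pi\alpha yn^{2}}\vartheta(y/\alpha;nx)$ (Lemma~\ref{3.1lem1}), differentiates, and bounds quotients of 1-d theta derivatives. A minor point: the duality $\theta(\alpha;z)=\alpha^{-1}\theta(1/\alpha;z)$ holds for every $z$, not only on $|z|=1$, so it does not simplify the arc specifically; and since your Step~4 path never touches the arc, that discussion can be dropped.
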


The proofs of Theorems A and B are based on determination of the signs of the first order derivatives of $\zeta(s,z)$ and $\theta(\alpha,z)$ as follows:
\begin{proposition}[\cite{Cas1959,Dia1964,Enn1964a,Enn1964b,Mon1988,Ran1953}] Signs of first order derivatives of $\zeta(s,z)$ and $\theta(\alpha,z)$ in fundamental domain.
\begin{itemize}
  \item [(1)] $\frac{\partial}{\partial y}\zeta(s,z)>0$ for $y\geq\frac{3}{2}$ and $s>1$.
  \item [(2)] $\frac{\partial}{\partial x}\zeta(s,z)<0$ for $y\geq\frac{3}{5}, 0<x<\frac{1}{2}$ and $s>1$.
  \item [(3)] $\frac{\partial}{\partial y}\theta(s,z)\geq0$ for $z\in\overline{\mathcal{D}_{\mathcal{G}}}$ and $\alpha>0$.
   \item [(4)] $\frac{\partial}{\partial x}\theta(s,z)\leq0$ for $y\geq\frac{1}{2}, 0<x<\frac{1}{2}$ and $\alpha>0$.

\end{itemize}
\end{proposition}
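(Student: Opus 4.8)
We outline a plan of proof.

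\emph{Reductions and set--up.} For $s>1$ one has the Mellin identity
\[
\zeta(s;z)=\frac{\pi^{s}}{\Gamma(s)}\int_{0}^{\infty}\bigl(\theta(\alpha;z)-1\bigr)\alpha^{s-1}\,d\alpha,
\]
valid term by term since $\theta(\alpha;z)-1=O(\alpha^{-1})$ as $\alpha\to0^{+}$ and decays exponentially as $\alpha\to\infty$. Differentiating under the integral sign exhibits $\partial_{x}\zeta$ and $\partial_{y}\zeta$ as integrals of $\partial_{x}\theta$ and $\partial_{y}\theta$ against the positive kernel $\alpha^{s-1}$, so (1) and (2) would follow from (3) and (4) -- in fact on the wider $y$--ranges permitted there, with strictness inherited from the strict interior bound $\partial_{x}\theta<0$. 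Moreover $\theta(\alpha;z)=\alpha^{-1}\theta(\alpha^{-1};z)$, so one may assume $\alpha\ge1$. For the theta function itself I would use two representations. The first is the row decomposition $\theta(\alpha;z)=\sum_{m\in\mathbb{Z}}e^{-\pi\alpha m^{2}y}\,g_{\alpha}(mx)$, where $g_{\alpha}(u):=\sum_{n\in\mathbb{Z}}e^{-\pi\alpha(u+n)^{2}/y}>0$ is $1$--periodic. The second, obtained by applying the Jacobi transformation to $g_{\alpha}$, is the Fourier expansion in $x$,
\[
\theta(\alpha;z)=\sqrt{\tfrac{y}{\alpha}}\Bigl(1+2\sum_{j\ge1}B_{j}(\alpha,y)\cos 2\pi jx\Bigr),\qquad B_{j}(\alpha,y)=2\sum_{d\mid j}e^{-\pi y\left(\alpha d^{2}+(j/d)^{2}/\alpha\right)}>0,
\]
whose coefficients obey $\alpha d^{2}+(j/d)^{2}/\alpha\ge 2j$ (AM--GM), hence $B_{j}\lesssim e^{-2\pi jy}$. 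A useful auxiliary fact is that the Jacobi triple product for $g_{\alpha}$ yields, with $q=e^{-\pi y/\alpha}\in(0,1)$ and uniformly in $\alpha,y$,
\[
(\log g_{\alpha})'(u)=-4\pi\sin 2\pi u\sum_{n\ge1}\frac{q^{2n-1}}{|1+q^{2n-1}e^{2\pi iu}|^{2}}<0\qquad(0<u<\tfrac12),
\]
so every $g_{\alpha}$ is strictly decreasing on $(0,\tfrac12)$.

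\emph{The sign of $\partial_{x}\theta$, whence (4) and (2).} Differentiating the Fourier expansion gives $\partial_{x}\theta=-4\pi\sqrt{y/\alpha}\sum_{j\ge1}jB_{j}\sin 2\pi jx$, so one must show $\sum_{j\ge1}jB_{j}\sin 2\pi jx\ge0$ for $0\le x\le\tfrac12$, $y\ge\tfrac12$, $\alpha\ge1$; equivalently, the $1$--periodic even function $x\mapsto\theta(\alpha;z)$ is unimodal on its period. Near $x=0$ and $x=\tfrac12$ this follows from a second--order computation showing that $x\mapsto\theta$ has there a strict local maximum and a strict local minimum: $\partial_{x}^{2}\theta(0)=-8\pi^{2}\sqrt{y/\alpha}\sum_{j}j^{2}B_{j}<0$, while $\partial_{x}^{2}\theta(\tfrac12)>0$ because there the $m=\pm1$ rows (with $g_{\alpha}''(\tfrac12)>0$) dominate. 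On a compact subinterval $[\delta,\tfrac12-\delta]$ I would isolate the $j=1$ term and bound the tail via $|\sin 2\pi jx|\le j\sin 2\pi x$, reducing matters to $B_{1}\ge\sum_{j\ge2}j^{2}B_{j}$; the decay $B_{j}\lesssim e^{-2\pi jy}$ together with $y\ge\tfrac12$ gives this when $\alpha$ is moderate, and for large $\alpha$ one argues instead from the row decomposition, where the $m=\pm1$ rows contribute $\sim-2e^{-\pi\alpha y}g_{\alpha}'(x)$ (with $g_{\alpha}'<0$) and dominate the $m\ge2$ rows because $y+x^{2}/y<4y$ whenever $x<\tfrac12$, $y\ge\tfrac12$. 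The threshold $y\ge\tfrac12$ is precisely what these tail estimates need.

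\emph{The sign of $\partial_{y}\theta$, whence (3) and (1).} Writing $w=w_{m,n}:=(mz+n)/\sqrt{y}\in\Lambda$, so that $|w|^{2}=|mz+n|^{2}/y$, a direct computation gives
\[
\partial_{y}\theta(\alpha;z)=-\frac{\pi\alpha}{y}\sum_{w\in\Lambda\setminus\{0\}}\bigl(\Im(w)^{2}-\Re(w)^{2}\bigr)e^{-\pi\alpha|w|^{2}},
\]
so (3) asserts that for $z\in\overline{\mathcal{D}_{\mathcal{G}}}$ the Gaussian--weighted ``horizontal spread'' of $\Lambda$ dominates the ``vertical'' one. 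At $z=i$ (square lattice) and at $z=e^{i\pi/3}$ (hexagonal lattice) the rotational symmetry of $\Lambda$ forces $\sum_{w}w^{2}e^{-\pi\alpha|w|^{2}}=0$, so the inequality is non--strict and sharp at the corner of $\mathcal{D}_{\mathcal{G}}$. For $y$ large the claim is immediate: the unique shortest vector is the horizontal $\pm1/\sqrt{y}$, and its contribution exceeds that of the next (essentially vertical, of length $\ge|z|/\sqrt{y}>1/\sqrt{y}$) vectors by an exponential factor. To reach the whole of $\overline{\mathcal{D}_{\mathcal{G}}}$ -- down to $y=\sqrt{3}/2$ on the arc $|z|=1$ -- I would combine this with the modular invariance $\theta(\alpha;-1/z)=\theta(\alpha;z)$, which transports the estimate from near the arc to its reflection, and with the $x$--monotonicity just established, which rules out interior critical points off the imaginary axis; along a vertical line the inequality reduces, via the Jacobi transformation once more, to the log--convexity of $\beta\mapsto\sum_{m}e^{-\pi\beta m^{2}}$ (Cauchy--Schwarz) and the monotonicity of $\beta\mapsto-\beta\bigl(\log\textstyle\sum_{m}e^{-\pi\beta m^{2}}\bigr)'$.

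\emph{The main obstacle.} Everything up through the $x$--derivative is the Jacobi transformation plus the AM--GM tail bound, with a routine (if slightly tedious) split by the size of $\alpha$. The genuine difficulty is the last step: establishing $\partial_{y}\theta\ge0$ on \emph{all} of $\overline{\mathcal{D}_{\mathcal{G}}}$, not merely for large $y$. The ``dominant--vector'' heuristic degrades as $y\downarrow\sqrt{3}/2$, where the essentially vertical lattice vectors become comparable in length to the horizontal one and $\partial_{y}\theta$ in fact vanishes at $e^{i\pi/3}$; one therefore cannot avoid a genuinely two--dimensional argument that uses the modular invariance to move the estimate across the arc $|z|=1$ while simultaneously controlling the $x$-- and $y$--dependence, uniformly in $\alpha\ge1$. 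That is where I expect the bulk of the work to lie.
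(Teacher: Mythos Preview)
The paper does not prove this proposition; it is quoted from the literature (Rankin, Cassels, Ennola, Diananda for $\zeta$; Montgomery for $\theta$) as background for the higher--order results in Theorem~\ref{Th1}. So there is no in--paper proof to compare against. That said, your Mellin reduction $\zeta\leadsto\theta$ is exactly what the paper itself uses (see \eqref{Th1zeta1}--\eqref{Th1zeta2}), and your treatment of the $x$--derivative via the Fourier/row expansion is the standard Montgomery approach and mirrors the machinery the paper deploys for the mixed derivatives (Lemma~\ref{3.1lem1} together with the quotient estimates of Lemmas~\ref{2lem1}--\ref{2lem2}, followed by a case split on $y/\alpha$). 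For (2) and (4) your outline is sound; the remaining work is bookkeeping.

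The genuine gap is in (3). Your proposed route---modular invariance to ``transport'' the estimate across the arc $|z|=1$, plus $x$--monotonicity to rule out interior critical points, plus log--convexity along the imaginary axis---does not assemble into a proof as stated. First, $\theta$ is modular invariant but $\partial_y\theta$ is not: under $z\mapsto -1/z$ the chain rule mixes $\partial_x$ and $\partial_y$ with Jacobian factors, so invariance of $\theta$ gives you a relation between derivatives at $z$ and at $-1/z$, not a transport of the sign of $\partial_y\theta$. Second, knowing $\partial_x\theta<0$ rules out critical points of $\theta$ off $x\in\{0,\tfrac12\}$, but that says nothing about the sign of $\partial_y\theta$ at non--critical points. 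Third, the claimed monotonicity of $\beta\mapsto -\beta(\log\sum_m e^{-\pi\beta m^2})'$ is not immediate (differentiating gives $\pi\langle m^2\rangle-\pi^2\beta\,\mathrm{Var}(m^2)$, whose sign is not obvious), and in any case the imaginary axis alone does not cover $\overline{\mathcal{D}_{\mathcal{G}}}$. Montgomery's actual argument, and the paper's argument for the higher derivatives, instead expands $\partial_y\theta$ through Lemma~\ref{3.1lem1}, isolates the dominant $n=1$ term, and bounds the tail $n\ge 2$ by explicit quotient estimates on one--dimensional theta functions---uniformly in $\alpha$ via the duality $\theta(\alpha^{-1};z)=\alpha\,\theta(\alpha;z)$ and a split on $y/\alpha$. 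You have correctly identified (3) as the hard part, but the method you sketch for it would need to be replaced by this direct estimation.
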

In this note, we aim to determine the signs of higher order derivatives of $\zeta(s;z)$ and $\theta(\alpha;z)$ and provide the consequence in corresponding minimization problems. Our main results are as follows:
\begin{theorem}\label{Th1}
 Assume that $\alpha>0$ and $s>1$. Then
\begin{itemize}
  \item [(1)] For $z\in \{(x,y)|\;0<x<\frac{1}{2},y\geq\frac{3}{5}\}\supseteq D_{\mathcal{G}}$, it holds that
  \begin{equation}\aligned\nonumber
\frac{\partial^2}{\partial x \partial y}\theta(\alpha;z)>0,\;
\frac{\partial^2}{\partial x \partial y}\zeta(s;z)>0.
\endaligned\end{equation}

  \item [(2)] For $z\in D_{\mathcal{G}}$, it holds that

  \begin{equation}\aligned\nonumber
\frac{\partial^3}{\partial x \partial y^2}\theta(\alpha;z)<0,\;
\frac{\partial^3}{\partial x \partial y^2}\zeta(s;z)<0.
\endaligned\end{equation}

\end{itemize}
\end{theorem}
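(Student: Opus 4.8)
The plan is to reduce both statements, and both functions, to one family of sign inequalities for the theta function, and then to extract the sign from the Jacobi transformation and the unimodality of $\vartheta_3$. First I would use the $\Gamma$-integral $r^{-s}=\frac1{\Gamma(s)}\int_0^\infty t^{s-1}e^{-tr}\,dt$ with $r=|mz+n|^2/y$ and monotone convergence to obtain
\[
\zeta(s;z)=\frac1{\Gamma(s)}\int_0^\infty t^{s-1}\Bigl(\theta\bigl(\tfrac t\pi;z\bigr)-1\Bigr)\,dt ,\qquad s>1 .
\]
Since $\partial_x$ and $\partial_y$ commute with the integral and kill the constant $-1$, it suffices to prove, \emph{for every} $\alpha>0$, that $\frac{\partial^2}{\partial x\partial y}\theta(\alpha;z)>0$ on $\{0<x<\tfrac12,\ y\ge\tfrac35\}$ and $\frac{\partial^3}{\partial x\partial y^2}\theta(\alpha;z)<0$ on $\mathcal{D}_{\mathcal{G}}$; integrating back against $t^{s-1}/\Gamma(s)\,dt$ then yields the corresponding statements for $\zeta$, and since $\{0<x<\tfrac12,\ y\ge\tfrac35\}\supseteq\mathcal{D}_{\mathcal{G}}$ this is exactly the Theorem. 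Moreover Poisson summation gives $\theta(\alpha;z)=\alpha^{-1}\theta(\alpha^{-1};z)$ — the unimodular lattice attached to $z$ is a quarter-turn rotation of its own dual — so I may assume $\alpha\ge1$.

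Applying the Jacobi transformation to the inner sum,
\[
\theta(\alpha;z)=\sqrt{\tfrac y\alpha}\sum_{m\in\mathbb{Z}}e^{-\pi\alpha m^2y}\,\vartheta_3\!\Bigl(mx;\tfrac{iy}{\alpha}\Bigr),\qquad
\vartheta_3(v;i\beta)=1+2\sum_{k\ge1}e^{-\pi\beta k^2}\cos(2\pi kv).
\]
The $m=0$ block is $x$-free, so under $\partial_x$ only the blocks $|m|\ge1$ survive; pairing $m$ with $-m$ and writing $A(y):=\sqrt{y/\alpha}\,e^{-\pi\alpha y}$, the relevant part of $\theta$ is $2A(y)\,\vartheta_3(x;\tfrac{iy}\alpha)$ together with the super-exponentially small blocks $|m|\ge2$ (bounded, with all derivatives through third order, by $O(e^{-3\pi\alpha y})\cdot\mathrm{poly}(\alpha)$ times the $|m|=1$ block, harmless for $\alpha\ge1,\ y\ge\tfrac35$). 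Using the heat equation $\partial_\beta\vartheta_3=\tfrac1{4\pi}\partial_v^2\vartheta_3$, hence $\partial_y\vartheta_3^{(j)}(x;\tfrac{iy}\alpha)=\tfrac1{4\pi\alpha}\vartheta_3^{(j+2)}(x;\tfrac{iy}\alpha)$, one finds that the $|m|=1$ block contributes to $\frac{\partial^2}{\partial x\partial y}\theta$ the amount $2\bigl(A'\vartheta_3'+\tfrac1{4\pi\alpha}A\,\vartheta_3'''\bigr)$ and to $\frac{\partial^3}{\partial x\partial y^2}\theta$ the amount $2\bigl(A''\vartheta_3'+\tfrac1{2\pi\alpha}A'\vartheta_3'''+\tfrac1{16\pi^2\alpha^2}A\,\vartheta_3^{(5)}\bigr)$, all derivatives of $\vartheta_3$ taken at $(x;\tfrac{iy}\alpha)$.

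Now two inputs close the argument. The classical one: $\vartheta_3(\cdot;i\beta)$ is strictly decreasing on $(0,\tfrac12)$ — each factor of its triple product is — so $\vartheta_3'(x;\tfrac{iy}\alpha)<0$ there. The elementary one: $A'(y)$ has the sign of $1-2\pi\alpha y$ and $A''(y)$ that of $4\pi^2\alpha^2y^2-4\pi\alpha y-1$, so $A'<0$ and $A''>0$ for $\alpha\ge1$, $y\ge\tfrac35$ (a fortiori on $\mathcal{D}_{\mathcal{G}}$). Thus the leading products $A'\vartheta_3'>0$ and $A''\vartheta_3'<0$ already have the claimed signs, and it remains to check the lower-order terms cannot reverse them. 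When $\beta=y/\alpha$ is not small, $\vartheta_3'''(x;i\beta)=16\pi^3\sum_k k^3e^{-\pi\beta k^2}\sin(2\pi kx)\ge0$ and $\vartheta_3^{(5)}(x;i\beta)\le0$ on $(0,\tfrac12)$ — from $|\sin(2\pi kx)|\le k\sin(2\pi x)$ once $q=e^{-\pi\beta}$ is below an explicit threshold — so every term carries the ``good'' sign. When $\beta$ is small (large $\alpha$), the Jacobi-transformed Gaussian series gives $|\vartheta_3^{(2j+1)}(x;i\beta)|\lesssim\beta^{-(2j+1)}\,|\vartheta_3'(x;i\beta)|$ on $(0,\tfrac12)$, while $A/|A'|=2y/(2\pi\alpha y-1)$, $|A'|/A''\sim 1/(\pi\alpha)$, $A/A''\sim 1/(\pi^2\alpha^2)$; combining, each lower-order term is at most a fixed fraction strictly below $1$ of the leading one, the slack coming from $y>\tfrac12$. (Near $x=\tfrac12$ everything degenerates like $\tfrac12-x$, the $|m|=1$ block still dominating, as is transparent from the $\sin$-Fourier form of the block.) The narrow transitional range of $\beta$ — a bounded range of $\alpha$, where neither the ``good sign'' nor the clean Gaussian bound applies — is dispatched by explicit estimates; alternatively, for such bounded $\alpha$ one may run the whole argument through the $x$-Fourier expansion $\theta=c_0(y)+\sum_{N\ge1}c_N(y)\cos(2\pi Nx)$, where $c_N>0$, $c_N(y)$ decays geometrically in $N$ (indeed $c_N(y)\lesssim \tau(N)\sqrt y\,e^{-2\pi Ny}$), and the crude bound $|\sin(2\pi Nx)|\le N\sin(2\pi x)$ on $(0,\tfrac12)$ shows the $N=1$ mode dominates. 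I expect this last ingredient — a uniform control of the ratios $\vartheta_3^{(3)}/\vartheta_3'$ and $\vartheta_3^{(5)}/\vartheta_3'$ on $(0,\tfrac12)$ with explicit constants, reconciling the large-$\alpha$ and bounded-$\alpha$ regimes — to be the main obstacle; everything else is bookkeeping around the dominant $|m|=1$ (equivalently $N=1$) contribution.
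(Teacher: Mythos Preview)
Your proposal is correct and follows essentially the same route as the paper. Both proofs (i) reduce $\zeta$ to $\theta$ via the Mellin integral and (ii) reduce to $\alpha\ge1$ via $\theta(\alpha^{-1};z)=\alpha\,\theta(\alpha;z)$; both then (iii) apply Poisson summation in the inner variable to write $\theta$ as $\sqrt{y/\alpha}\sum_m e^{-\pi\alpha m^2y}\vartheta(y/\alpha;mx)$, (iv) isolate the $|m|=1$ block as the dominant contribution, and (v) split into the regimes $y/\alpha\ge\tfrac12$ and $y/\alpha\le\tfrac12$, controlling the subleading terms by explicit two--sided bounds on the ratios $\vartheta_{XY}/\vartheta_Y$ and $\vartheta_{XXY}/\vartheta_Y$ together with the tail $\sum_{m\ge2}$. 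Your heat--equation bookkeeping ($\partial_\beta\vartheta_3=\tfrac1{4\pi}\partial_v^2\vartheta_3$, hence $\vartheta_{XY}=\tfrac1{4\pi}\vartheta_3'''$, $\vartheta_{XXY}=\tfrac1{16\pi^2}\vartheta_3^{(5)}$) is just a repackaging of the paper's mixed derivatives; your ``good sign'' observation for $\vartheta_3'''>0$, $\vartheta_3^{(5)}<0$ on $(0,\tfrac12)$ when $\beta\ge\tfrac12$ is the qualitative content of the paper's Lemma~2.1(3),(7), and your Gaussian bounds for small $\beta$ match Lemma~2.1(4) and Lemma~2.2. The ``main obstacle'' you anticipate---uniform quantitative control of $\vartheta_3^{(3)}/\vartheta_3'$ and $\vartheta_3^{(5)}/\vartheta_3'$ across the transitional range---is precisely what the paper supplies in its Section~2 (Lemmas~2.1--2.5); with those constants in hand the margins are thin but positive (for part~(1) in the small--$\beta$ regime the paper's decisive factor is $1-\tfrac1{2\pi\alpha y}-\tfrac1{4y^2}-0.039$, barely positive at $y=\tfrac35$). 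One small slip: your asymptotic $|\vartheta_3^{(2j+1)}/\vartheta_3'|\lesssim\beta^{-(2j+1)}$ should read $\beta^{-2j}$ (e.g.\ $|\vartheta_3'''/\vartheta_3'|\le\pi^2/\beta^2$ from the paper's Lemma~2.1(4)), which only helps you.
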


As a direct consequence of Theorem \ref{Th1}, we have
\begin{corollary}\label{Cor1}
 Assume that $\alpha>0$ and $s>1$. Let $\Gamma:=\{z\in\mathbb{H}: z=e^{i\theta},\; \theta\in[\frac{\pi}{3},\frac{\pi}{2}]\}$. Then
  \begin{equation}\aligned\nonumber
&\min_{z\in\overline{D_{\mathcal{G}}}}\frac{\partial}{\partial x}\theta(\alpha;z)=
\min_{z\in\Gamma}\frac{\partial}{\partial x}\theta(\alpha;z),\;
\min_{z\in\overline{D_{\mathcal{G}}}}\frac{\partial}{\partial x}\zeta(s;z)=
\min_{z\in\Gamma}\frac{\partial}{\partial x}\zeta(s;z);\\
&\min_{z\in\overline{D_{\mathcal{G}}}}\frac{\partial^2}{\partial x \partial y}\theta(\alpha;z)
=\min_{z\in\Gamma}\frac{\partial^2}{\partial x \partial y}\theta(\alpha;z)
,\;
\min_{z\in\overline{D_{\mathcal{G}}}}\frac{\partial^2}{\partial x \partial y}\zeta(s;z)
=\min_{z\in\Gamma}\frac{\partial^2}{\partial x \partial y}\zeta(s;z)
.
\endaligned\end{equation}

\end{corollary}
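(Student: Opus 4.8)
The plan is to deduce Corollary \ref{Cor1} from Theorem \ref{Th1} by a monotonicity argument on the boundary of the fundamental domain. First I would recall that $\overline{D_{\mathcal{G}}}$ is the closed region $\{0\le x\le \tfrac12,\ |z|\ge 1\}$, whose boundary consists of three pieces: the vertical segment $x=0$, the vertical segment $x=\tfrac12$, and the circular arc $\Gamma=\{e^{i\theta}:\theta\in[\tfrac\pi3,\tfrac\pi2]\}$. Since $\partial_x\theta(\alpha;z)$ and $\partial_x\zeta(s;z)$ are smooth on the relevant set (the series converge locally uniformly together with all derivatives), the minimum of $\partial_x$ over the compact set $\overline{D_{\mathcal{G}}}$ is attained; the strategy is to show it cannot be attained in the interior nor on the two vertical edges, forcing it onto $\Gamma$.

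The key step uses part (1) of Theorem \ref{Th1}: $\partial^2_{xy}\theta>0$ and $\partial^2_{xy}\zeta>0$ throughout $\{0<x<\tfrac12,\ y\ge\tfrac35\}\supseteq D_{\mathcal{G}}$. Fix $x\in(0,\tfrac12)$ and consider the function $y\mapsto \partial_x\theta(\alpha;x+iy)$ along the vertical line through $x$, restricted to the part lying in $\overline{D_{\mathcal{G}}}$, i.e.\ $y\ge\sqrt{1-x^2}$. Its $y$-derivative is $\partial^2_{xy}\theta>0$, so this function is strictly increasing in $y$; hence its infimum over that vertical ray is attained at the lower endpoint $y=\sqrt{1-x^2}$, which is the point of $\Gamma$ with that abscissa. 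This already shows that for the minimization over the interior plus the arc, no interior point can beat the arc: $\min_{z\in D_{\mathcal G}}\partial_x\theta \ge \min_{z\in\Gamma}\partial_x\theta$, and trivially $\le$, giving equality on $D_{\mathcal G}\cup\Gamma$. The same argument verbatim handles $\zeta$ and the mixed second derivative $\partial^2_{xy}$ in place of $\partial_x$, using part (2) of Theorem \ref{Th1} (the sign of $\partial^3_{xy^2}$) to get that $y\mapsto \partial^2_{xy}\theta(\alpha;x+iy)$ is strictly decreasing in $y$ — wait, I must be careful with the direction: $\partial_y(\partial^2_{xy}\theta)=\partial^3_{xy^2}\theta<0$ on $D_{\mathcal G}$, so $y\mapsto\partial^2_{xy}\theta$ is strictly decreasing, and therefore on a vertical ray its infimum is at $y\to+\infty$; I then need the decay estimate that $\partial^2_{xy}\theta(\alpha;x+iy)\to 0$ as $y\to\infty$, uniformly enough, together with positivity from part (1), to conclude the infimum along the ray is not attained in the interior but the relevant competition is still with the boundary arc. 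In fact the cleaner route for the $\partial^2_{xy}$ statement is: part (1) says $\partial^2_{xy}>0$ everywhere on the strip, so $\min_{\overline{D_{\mathcal G}}}\partial^2_{xy}>0$ is attained on the compact set, and part (2) forces it to the boundary; combined with the vertical-edge analysis below, it lands on $\Gamma$.

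It remains to dispose of the two vertical edges $x=0$ and $x=\tfrac12$. Here I would invoke the symmetry $z\mapsto -\bar z$ from the group $\mathcal{G}$: $\theta(\alpha;-\bar z)=\theta(\alpha;z)$ means $\theta$ is even in $x$, so $\partial_x\theta(\alpha;iy)=0$ for all $y$, and likewise $z\mapsto 1-\bar z$ (translation composed with reflection) gives $\partial_x\theta(\alpha;\tfrac12+iy)=0$. Thus on both vertical edges $\partial_x\theta\equiv 0$ and $\partial_x\zeta\equiv 0$. On the other hand, by Proposition (items (2) and (4)) $\partial_x\theta\le 0$ and $\partial_x\zeta<0$ strictly in the open region with $y\ge\tfrac12$ (resp.\ $\tfrac35$), so the minima are strictly negative and in particular not attained on the vertical edges; they are therefore attained on $\Gamma$, the only remaining boundary piece. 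For the $\partial^2_{xy}$ functionals one checks similarly: differentiating the even-in-$x$ relation, $\partial^2_{xy}\theta(\alpha;iy)$ and $\partial^2_{xy}\theta(\alpha;\tfrac12+iy)$ need not vanish, but part (1) gives strict positivity there, whereas — and this is the one genuinely delicate point — I must still rule these edges out as the location of the \emph{minimum}; I would do this by showing, via the explicit Fourier/series representation and part (2), that along each vertical edge $y\mapsto\partial^2_{xy}$ is monotone, so its value is dominated by the corner value at $z=i$ or $z=e^{i\pi/2}$... which already lies on $\Gamma$. Assembling these pieces yields all four stated equalities. The main obstacle I anticipate is precisely this last bookkeeping for the mixed second derivative on the vertical edges — making sure the monotonicity from Theorem \ref{Th1}(2) is applied in the correct direction and that the endpoints of the resulting monotone functions are genuinely points of $\Gamma$ — rather than any deep new estimate; everything else is a direct consequence of Theorem \ref{Th1} and the known first-order signs.
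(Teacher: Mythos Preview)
The paper gives no proof beyond calling the corollary ``a direct consequence of Theorem~\ref{Th1}'', so the comparison is against the evident intended argument.

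For the first line (the $\partial_x$ minima) your plan is correct and is precisely that argument: fix $x\in(0,\tfrac12)$, use $\partial^2_{xy}\theta>0$ from Theorem~\ref{Th1}(1) to see that $y\mapsto\partial_x\theta$ is strictly increasing, so the minimum along each vertical ray in $\overline{D_{\mathcal G}}$ is at its foot on $\Gamma$; the vertical edges are handled by $\partial_x\theta\equiv 0$ there. One small correction: $\overline{D_{\mathcal{G}}}$ is unbounded, hence not compact; the minimum exists because of the monotonicity argument, not compactness.

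For the second line there is a gap. You assert that $\partial^2_{xy}\theta(\alpha;iy)$ and $\partial^2_{xy}\theta(\alpha;\tfrac12+iy)$ ``need not vanish'' and then try to rescue the argument via Theorem~\ref{Th1}(2). But they \emph{do} vanish: differentiating the identity $\partial_x\theta(\alpha;iy)\equiv 0$ (which you already derived from evenness in $x$) with respect to $y$ gives $\partial^2_{xy}\theta(\alpha;iy)\equiv 0$, and the same holds on $x=\tfrac12$. Combined with the strict positivity on $0<x<\tfrac12$ from Theorem~\ref{Th1}(1), this immediately yields
\[
\min_{\overline{D_{\mathcal{G}}}}\partial^2_{xy}\theta \;=\;0\;=\;\partial^2_{xy}\theta(\alpha;i)\;=\;\min_{\Gamma}\partial^2_{xy}\theta,
\]
the right-hand minimum being attained at the endpoints $i$ and $e^{i\pi/3}$ of $\Gamma$; the same goes for $\zeta$. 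Your attempted detour through Theorem~\ref{Th1}(2) actually points the wrong way: $\partial^3_{xy^2}\theta<0$ makes $\partial^2_{xy}\theta$ \emph{decreasing} in $y$, so along vertical lines the infimum would be at $y\to\infty$, not on $\Gamma$; and on the vertical edges $\partial^3_{xy^2}\theta$ is likewise odd in $x$ and hence vanishes, so no nontrivial monotonicity is available there anyway.
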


\section{Estimates on 1-d theta functions}

In this section, we collect some known estimates on 1-d theta functions. We also derive some useful estimates here which have their independent interest.

Recall that the classical one-dimensional theta function is given by
\begin{equation}\aligned\label{TXY}
\vartheta(X;Y)=\sum_{n=-\infty}^\infty e^{-\pi n^2 X} e^{2n\pi i Y}.
 \endaligned\end{equation}
By the Poisson summation formula, it holds that
\begin{equation}\aligned\label{Poisson}
\vartheta(X;Y)=X^{-\frac{1}{2}}\sum_{n=-\infty}^\infty e^{-\pi \frac{(n-Y)^2}{X}} .
 \endaligned\end{equation}

To estimate bounds of quotients of derivatives of $\vartheta(X;Y)$, we denote that
\begin{equation}\aligned\label{duct}
\mu(X)&=\sum_{n=2}^{\infty}n^2e^{-\pi (n^2-1) X},\:\:\:\:\:\:\:\:\:\:\:\:\hat{\mu}(X)=\sum_{n=2}^{\infty}n^2e^{-\pi (n^2-1) X}(-1)^{n+1},\\
\nu(X)&=\sum_{n=2}^{\infty}n^4e^{-\pi (n^2-1) X},\:\:\:\:\:\:\:\:\:\:\:\:\hat{\nu}(X)=\sum_{n=2}^{\infty}n^4e^{-\pi (n^2-1) X}(-1)^{n+1},\\
\omega(X)&=\sum_{n=2}^{\infty}n^6e^{-\pi (n^2-1) X},\:\:\:\:\:\:\:\:\:\:\:\hat{\omega}(X)=\sum_{n=2}^{\infty}n^6e^{-\pi (n^2-1) X}(-1)^{n+1}.
 \endaligned\end{equation}.

We introduce some useful estimates for the quotient of the derivative of 1-d theta functions.

\begin{lemma}[\cite{Luo2023,DL2024}]\label{2lem1}
Assume that $Y>0$ and $k\in \mathbb{N^{+}}$. It holds that
\begin{itemize}
  \item [(1)] For $ X>\frac{1}{5}$, then $\left|\frac{\vartheta_{Y}(X;kY)}{\vartheta_{Y}(X;Y)} \right|\leq k\cdot \frac{1+\mu(X)}{1-\mu(X)}.$
  \item [(2)] For $X< \frac{\pi}{\pi+2} $, then $\left|\frac{\vartheta_{Y}(X;kY)}{\vartheta_{Y}(X;Y)} \right|\leq  \frac{k}{\pi}e^{\frac{\pi}{4X}}.$

  \item [(3)] For $X\geq \frac{1}{5}$, then $-{\pi}\cdot \frac{1+{\nu}(X)}{1+{\mu}(X)}\leq \frac{\vartheta_{XY}(X;Y)}{\vartheta_{Y}(X;Y)} \leq -{\pi} \cdot\frac{1+\hat{\nu}(X)}{1+\hat{\mu}(X)}.$
  \item [(4)] For $0<X\leq\frac{1}{2}$, then $\frac{\frac{3}{4}{X}^2+2{\pi}^2 e^{-\frac{\pi}{X}}}{-\frac{1}{2}{X}^3+2\pi{X}^2e^{-\frac{\pi}{X}}}\leq \frac{\vartheta_{XY}(X;Y)}{\vartheta_{Y}(X;Y)} \leq \frac{\pi}{4{X}^2}.$
  \item [(5)] For $ X\geq \frac{1}{5}$, then $\left|\frac{\vartheta_{XY}(X;kY)}{\vartheta_{Y}(X;Y)} \right|\leq k\pi \cdot \frac{1+\nu(X)}{1-\mu(X)}.$
  \item [(6)] For $0<X\leq \frac{1}{2}$, then $\left|\frac{\vartheta_{XY}(X;kY)}{\vartheta_{Y}(X;Y)} \right|\leq \frac{3k}{2\pi} X^{-1}(1+\frac{\pi}{6}\frac{1}{X})e^{\frac{\pi}{4X}}.$
  \item [(7)] For $X\geq \frac{59}{250},$
  ${\pi}^2\cdot \frac{1+\hat{\omega}(X)}{1+\hat{\mu}(X)}\leq \frac{\vartheta_{XXY}(X;Y)}{\vartheta_{Y}(X;Y)} \leq {\pi}^2 \cdot\frac{1+\omega(X)}{1+\mu(X)}.$
 \end{itemize}
 \end{lemma}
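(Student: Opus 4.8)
The plan is to reduce each of the seven bounds to an estimate on a ratio of explicit series. Differentiating the defining series \eqref{TXY} and its Poisson transform \eqref{Poisson} in $X$ and $Y$ produces two representations of every relevant derivative: the ``direct'' trigonometric series
\begin{equation*}
\begin{aligned}
\vartheta_{Y}(X;Y)&=-4\pi\sum_{n\ge1}ne^{-\pi n^2X}\sin(2\pi nY),\\
\vartheta_{XY}(X;Y)&=4\pi^2\sum_{n\ge1}n^3e^{-\pi n^2X}\sin(2\pi nY),\\
\vartheta_{XXY}(X;Y)&=-4\pi^3\sum_{n\ge1}n^5e^{-\pi n^2X}\sin(2\pi nY),
\end{aligned}
\end{equation*}
and the Poisson-dual ones such as $\vartheta_{Y}(X;Y)=\frac{2\pi}{X^{3/2}}\sum_{m\in\mathbb{Z}}(m-Y)e^{-\pi(m-Y)^2/X}$ (together with its $X$-derivative). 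I would use the direct form when $X$ is bounded below and the dual form when $X$ is small, because in each regime the corresponding ``leading'' term ($e^{-\pi X}$, resp.\ the $m=0$ Gaussian) dominates. Using the periodicity and evenness of $\vartheta$ in $Y$, one may throughout assume $0<Y\le\frac{1}{2}$, so $\sin(2\pi Y)>0$.

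For parts (1) and (5) I would factor $e^{-\pi X}\sin(2\pi Y)$ out of the numerator and denominator of the quotient: the $n\ge2$ tail of a series $\sum_{n}n^{2j+1}e^{-\pi n^2X}\sin(2\pi nY)$ becomes $\sum_{n\ge2}n^{2j+1}e^{-\pi(n^2-1)X}U_{n-1}(\cos2\pi Y)$, and the Chebyshev bound $|U_{n-1}(\cos2\pi Y)|=|\sin(2\pi nY)/\sin(2\pi Y)|\le n$ collapses it into one of the sums $\mu,\nu,\omega$; this gives (1) as soon as $\mu(X)<1$, which is the content of $X>\frac{1}{5}$, and (5) is the same computation with $\vartheta_{XY}$ in the numerator. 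For parts (2), (4), (6) I would instead use the Poisson-dual series: with $0<Y\le\frac{1}{2}$ the $m=0$ Gaussian dominates, and since $e^{-\pi(m-Y)^2/X}$ with $|m-Y|\le\frac{1}{2}$ can be as small as $e^{-\pi/(4X)}$, comparing the relevant Gaussians for $kY$ and for $Y$ produces the factor $\frac{k}{\pi}e^{\pi/(4X)}$ of (2) (and its one-more-$X$-derivative analogue of (6)) once the $|m|\ge1$ remainder is controlled, which is exactly the range $X<\frac{\pi}{\pi+2}$. Part (4) applies the same idea to $\vartheta_{XY}/\vartheta_{Y}=\partial_X F/F-\frac{3}{2X}$ with $F=\sum_m(m-Y)e^{-\pi(m-Y)^2/X}$: the leading term of $\partial_X F/F$ is $\pi Y^2/X^2\le\pi/(4X^2)$, which gives the upper bound, while keeping the $|m|=1$ terms, of size $\sim e^{-\pi/X}$, explicitly produces the closed-form numerator and denominator of the lower bound.

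The sharp two-sided items (3) and (7) are where I expect the real work. Here too one factors $e^{-\pi X}\sin(2\pi Y)$ from numerator and denominator, writing the quotient as $-\pi\,P(c)/Q(c)$ (resp.\ $\pi^2 P(c)/Q(c)$) with $c=\cos2\pi Y\in[-1,1]$ and $P,Q$ the resulting series. The endpoints $c=1$ ($Y\to0^+$) and $c=-1$ ($Y=\frac{1}{2}$, taken as a limit, since $\sin2\pi Y\to0$ there) reproduce, via $U_{n-1}(\pm1)=(\pm1)^{n-1}n$, exactly the quotients formed with $(\nu,\mu)$ and with $(\hat\nu,\hat\mu)$ — and $(\omega,\mu)$, $(\hat\omega,\hat\mu)$ for (7). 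So the content of (3) and (7) is that these endpoint values are the extremes of $P(c)/Q(c)$ on $[-1,1]$; establishing this amounts to a monotonicity statement (sign-definiteness of $P'(c)Q(c)-P(c)Q'(c)$), and it is this step — together with checking that $\frac{1}{5}$, $\frac{59}{250}$, $\frac{\pi}{\pi+2}$ and the reduction to $0<Y\le\frac{1}{2}$ are precisely the ranges on which the relevant denominators and bracketed expressions keep a fixed sign — that I anticipate as the main obstacle. Everything else is bookkeeping with the two series representations and the elementary inequality $|\sin(2\pi nY)|\le n|\sin(2\pi Y)|$.
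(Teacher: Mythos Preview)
The paper does not prove this lemma at all: it is quoted verbatim from the cited references \cite{Luo2023,DL2024}, with no argument given in the present paper. So there is no ``paper's own proof'' to compare your proposal against.

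That said, your strategy is the natural one and matches what those references do in outline: use the direct trigonometric series for $X$ bounded below, the Poisson-dual Gaussian series for small $X$, factor out the $n=1$ (resp.\ $m=0$) term, and collapse the tails via $|\sin(2\pi nY)|\le n|\sin(2\pi Y)|$ into the auxiliary sums $\mu,\nu,\omega$ and their hatted variants. Your identification of the endpoints $c=\pm1$ in parts (3) and (7) as producing exactly the $(\nu,\mu)$, $(\hat\nu,\hat\mu)$, $(\omega,\mu)$, $(\hat\omega,\hat\mu)$ quotients is correct, and the monotonicity of $P(c)/Q(c)$ in $c$ is indeed the heart of those two items; the specific thresholds $\tfrac{1}{5}$ and $\tfrac{59}{250}$ are there precisely to guarantee the sign condition on $P'Q-PQ'$ (equivalently, positivity of the denominator series and smallness of the tail) that you flag as the main obstacle.
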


Further, we provide two estimates for the quotient of the derivative of 1-d theta functions.
\begin{lemma} \label{2lem2} For $0<X\leq \frac{1}{2},\;Y>0,$ and $k\in \mathbb{N^{+}}$, it holds that
\begin{itemize}
      \item [(1)] $\frac{15}{4X^2}-\frac{\pi}{4X^4}\leq \frac{\vartheta_{XXY}(X;Y)}{\vartheta_{Y}(X;Y)}\leq \frac{15}{4X^2}+\frac{\pi}{4X^4}.$
            \item [(2)] $\left|\frac{\vartheta_{XXY}(X;kY)}{\vartheta_{Y}(X;Y)} \right|\leq \frac{k}{4X^2}(\frac{15}{\pi}+\frac{1}{X^2})e^{\frac{\pi}{4X}}.$
            \end{itemize}
\end{lemma}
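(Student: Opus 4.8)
For part (1), I would differentiate the Poisson form \eqref{Poisson} termwise; part (2) then drops out with almost no extra work, so let me dispose of it first.

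\textbf{Part (2) from part (1).} Since part (1) is stated for every $Y>0$, applying it with $kY$ in place of $Y$ gives $\bigl|\vartheta_{XXY}(X;kY)/\vartheta_{Y}(X;kY)\bigr|\le \tfrac{15}{4X^2}+\tfrac{\pi}{4X^4}$. As $0<X\le\tfrac12<\tfrac{\pi}{\pi+2}$, Lemma~\ref{2lem1}(2) gives $\bigl|\vartheta_{Y}(X;kY)/\vartheta_{Y}(X;Y)\bigr|\le\tfrac{k}{\pi}e^{\frac{\pi}{4X}}$; multiplying the two and using $\bigl(\tfrac{15}{4X^2}+\tfrac{\pi}{4X^4}\bigr)\tfrac{k}{\pi}=\tfrac{k}{4X^2}\bigl(\tfrac{15}{\pi}+\tfrac{1}{X^2}\bigr)$ yields the claimed estimate. (At the points $Y\in\tfrac12\mathbb Z$, where $\vartheta_Y(X;Y)=0$, one also has $\vartheta_Y(X;kY)=\vartheta_{XXY}(X;kY)=0$, and the statement is read as the limiting inequality.)

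\textbf{Set-up for part (1).} Writing $z_n:=n-Y$, termwise differentiation of \eqref{Poisson} gives $\vartheta_Y=\tfrac{2\pi}{X^{3/2}}\sum_{n}z_n e^{-\pi z_n^2/X}$, and from $\partial_X^2\bigl(X^{-3/2}e^{-\pi z^2/X}\bigr)=X^{-7/2}e^{-\pi z^2/X}\bigl(\tfrac{15}{4}-\tfrac{5\pi z^2}{X}+\tfrac{\pi^2 z^4}{X^2}\bigr)$ one gets $\vartheta_{XXY}=\tfrac{2\pi}{X^{7/2}}\sum_{n}z_n\bigl(\tfrac{15}{4}-\tfrac{5\pi z_n^2}{X}+\tfrac{\pi^2 z_n^4}{X^2}\bigr)e^{-\pi z_n^2/X}$. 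Subtracting $\tfrac{15}{4X^2}\vartheta_Y$ cancels the constant term of the bracket, leaving $\vartheta_{XXY}-\tfrac{15}{4X^2}\vartheta_Y=\tfrac{2\pi^2}{X^{9/2}}\sum_n z_n^3\bigl(\tfrac{\pi z_n^2}{X}-5\bigr)e^{-\pi z_n^2/X}$. Since $\vartheta_{XXY}/\vartheta_Y$ is $1$-periodic and even in $Y$, and $\vartheta_Y(X;Y)\neq 0$ for $Y\in(0,\tfrac12)$, it suffices to treat $0<Y\le\tfrac12$, and the asserted two-sided bound becomes equivalent to the single inequality
\[
\Bigl|\,\sum_{n\in\mathbb Z}z_n^3(\pi z_n^2-5X)\,e^{-\pi z_n^2/X}\,\Bigr|\ \le\ \tfrac14\,\Bigl|\,\sum_{n\in\mathbb Z}z_n\,e^{-\pi z_n^2/X}\,\Bigr|.
\]

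\textbf{Proof of the reduced inequality.} I would split off the terms $n=0,1$ — the only $z_n$ with $|z_n|<1$ when $0<Y\le\tfrac12$, $0<X\le\tfrac12$ — and control the rest by grouping $n\leftrightarrow -n$: for $n\le-1$ or $n\ge2$ one has $z_n^2\ge1$, so $e^{-\pi z_n^2/X}\le e^{-\pi/X}\le e^{-2\pi}$ times a factor decaying rapidly in $|n|$, while $|z_n|\le|n|+\tfrac12$ and $|\pi z_n^2-5X|\le\pi z_n^2+\tfrac52$; hence both sides' tails are at most an explicit constant times $e^{-\pi/X}$. For $Y$ bounded away from $\tfrac12$ the term $-Y^3(\pi Y^2-5X)e^{-\pi Y^2/X}$ governs and the inequality reduces, modulo the exponentially small tail, to $Y^2|\pi Y^2-5X|\le\tfrac14$, which holds in that range since $\pi Y^2\le\tfrac\pi4$ and $5X\le\tfrac52$. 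The main obstacle is the regime $Y\to\tfrac12$: there the $n=0$ and $n=1$ contributions to \emph{both} sums nearly cancel — at $Y=\tfrac12$ every reflected pair cancels and both sums vanish — so no single term dominates and the inequality is nearly tight. I would treat it by setting $Y=\tfrac12-t$ and pairing $n$ with $1-n$, which turns each side into $\int_{-t}^{t}\bigl(\sum_{j\ge1}\varphi'(j-\tfrac12+s)\bigr)ds$ with $\varphi(z)=z^3(\pi z^2-5X)e^{-\pi z^2/X}$ on the left and $\varphi(z)=ze^{-\pi z^2/X}$ on the right. For $|t|$ small both inner sums are dominated by their $j=1$ terms $\varphi'(\tfrac12+s)$, and the inequality reduces to a neighborhood version of
\[
\Bigl|\,\frac{\tfrac{15\pi}{16}-\tfrac{15X}{4}-\tfrac{\pi^2}{32X}}{\,1-\tfrac{\pi}{2X}\,}\,\Bigr|=\Bigl|\,\frac{30\pi X-120X^2-\pi^2}{32X-16\pi}\,\Bigr|\ \le\ \tfrac14,
\]
i.e. to $|30\pi X-120X^2-\pi^2|\le 4\pi-8X$ for $0<X\le\tfrac12$, which is verified by elementary quadratic estimates. (On the subrange $X\ge\tfrac{59}{250}$ one may alternatively invoke Lemma~\ref{2lem1}(7).) The fiddliest step is matching the two $Y$-regimes at a suitable threshold, after which part (1) is complete.
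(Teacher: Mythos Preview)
Your derivation of Part~(2) from Part~(1) via the factorization $\vartheta_{XXY}(X;kY)/\vartheta_Y(X;Y)=\bigl(\vartheta_{XXY}(X;kY)/\vartheta_Y(X;kY)\bigr)\bigl(\vartheta_Y(X;kY)/\vartheta_Y(X;Y)\bigr)$ together with Lemma~\ref{2lem1}(2) is exactly what the paper does. Your set-up for Part~(1)---the Poisson-side computation of $\vartheta_{XXY}/\vartheta_Y$ and the reduction to
\[
\Bigl|\sum_n z_n^3(\pi z_n^2-5X)e^{-\pi z_n^2/X}\Bigr|\ \le\ \tfrac14\,\Bigl|\sum_n z_n e^{-\pi z_n^2/X}\Bigr|
\]
---is also the paper's route (this is precisely its Lemma~\ref{2lem4}).

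Where you diverge is in proving that inequality. The paper splits on $X$: for $X\le(4-\pi)/20$ it invokes the known separate bounds on the third- and fifth-moment ratios (Lemma~\ref{2lem3}(3),(4)), giving $|\mathcal Q|\le\pi/16+5X/4\le 1/4$; for $X\in[(4-\pi)/20,\tfrac12]$ it runs a three-subinterval case analysis in $Y$ (Lemma~\ref{2lem5}) with explicit control of finitely many terms and an exponential tail. You instead split on $Y$, proposing an $n=0$-dominant argument away from $\tfrac12$ and an $n\leftrightarrow 1-n$ pairing/linearization near $\tfrac12$; your endpoint reduction to $\bigl|30\pi X-120X^2-\pi^2\bigr|\le 4\pi-8X$ is correct and is a nice way to capture the behaviour at $Y=\tfrac12$.

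However, your away-from-$\tfrac12$ step has a real gap. You assert ``$Y^2|\pi Y^2-5X|\le\tfrac14$, which holds in that range since $\pi Y^2\le\tfrac\pi4$ and $5X\le\tfrac52$,'' but those two bounds only yield $Y^2|\pi Y^2-5X|\le \tfrac14\cdot\tfrac52=\tfrac58$; in fact $Y^2(5X-\pi Y^2)\approx 0.43$ at $Y=X=\tfrac12$, so the leading-term estimate already exceeds $\tfrac14$ well before your near-$\tfrac12$ expansion becomes usable. Moreover, the ``$n=0$ governs'' approximation itself needs $e^{-\pi(1-2Y)/X}$ to be small, which also fails as $Y\uparrow\tfrac12$ when $X$ is near $\tfrac12$. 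The ``fiddliest step'' you defer---matching the two $Y$-regimes---is therefore not bookkeeping: it is exactly where the work lives, and your sketch gives no mechanism to bridge the intermediate range (roughly $Y\in[0.34,\tfrac12)$ with $X$ near $\tfrac12$). The paper closes this via the explicit three-interval analysis of Lemma~\ref{2lem5}; something of comparable granularity would be needed to make your $Y$-split rigorous.
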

\begin{proof}
By \eqref{Poisson}, one gets
\begin{equation}\aligned\label{2lem2eq1}
\frac{\vartheta_{XXY}(X;Y)}{\vartheta_{Y}(X;Y)}
=\frac{15}{4}X^{-2} +\pi X^{-4}\big( \pi\cdot\frac{\underset{n\in \mathbb{Z}}{\sum}(n-Y)^5 e^{-\frac{\pi(n-Y)^2}{X}}}
{\underset{n\in \mathbb{Z}}{\sum}(n-Y)e^{-\frac{\pi(n-Y)^2}{X}}}-5X\cdot \frac{\underset{n\in \mathbb{Z}}{\sum}(n-Y)^3 e^{-\frac{\pi(n-Y)^2}{X}}}
{\underset{n\in \mathbb{Z}}{\sum}(n-Y)e^{-\frac{\pi(n-Y)^2}{X}}} \big).
\endaligned\end{equation}
Combining \eqref{2lem2eq1} with Lemma \ref{2lem4} (in the following), we obtain (1). By the following deformation:
\begin{equation}\aligned\nonumber
\left|\frac{\vartheta_{XXY}(X;kY)}{\vartheta_{Y}(X;Y)}\right|
=\left|\frac{\vartheta_{XXY}(X;kY)}{\vartheta_{Y}(X;kY)}\right|\cdot\left|\frac{\vartheta_{Y}(X;kY)}{\vartheta_{Y}(X;Y)}\right|,
\endaligned\end{equation}
and using (1) along with Lemma \ref{2lem1}, we arrive at (2).
\end{proof}

To prove Lemma \ref{2lem2}, we need the following three auxiliary lemmas.
\begin{lemma}[\cite{Luo2022,Luo2023,DL2024}]\label{2lem3} Assume that $k\in \mathbb{Z}$. Then it holds that
\begin{itemize}
\item [(1)] ${\sum}_{n\in \mathbb{Z}}(n-Y)e^{-\frac{ \pi (n-Y)^2}{X}}\leq 0$ for $X>0$ and $Y\in[k,k+\frac{1}{2}]$.
\item [(2)] ${\sum}_{n\in \mathbb{Z}}(n-Y)e^{-\frac{ \pi (n-Y)^2}{X}}\geq0$ for $X>0$ and $Y\in[k-\frac{1}{2},k]$.
\item [(3)] $\underset{X\in(0,\frac{1}{2}],Y\in \mathbb{R}}{\sup}\left|   \frac{\sum_{n\in\mathbb{Z}}(n-Y)^3 e^{-\frac{\pi(n-Y)^2}{X}}}{\sum_{n\in \mathbb{Z}}(n-Y)e^{-\frac{\pi(n-Y)^2}{X}}}        \right|\leq\frac{1}{4}.$
      \item [(4)] $
\underset{X\in(0,\frac{1}{2}],Y\in \mathbb{R}}{\sup}\left|   \frac{\sum_{n\in\mathbb{Z}}(n-Y)^5 e^{-\frac{\pi(n-Y)^2}{X}}}{\sum_{n\in \mathbb{Z}}(n-Y)e^{-\frac{\pi(n-Y)^2}{X}}}        \right|\leq\frac{1}{16}.$
  \end{itemize}
\end{lemma}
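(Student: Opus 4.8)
The plan is to reduce to $Y\in[0,\tfrac12]$ and then treat the two groups of assertions separately. Put $S_k(X,Y):=\sum_{n\in\mathbb Z}(n-Y)^k e^{-\pi(n-Y)^2/X}$; by \eqref{Poisson} one has $\vartheta_Y(X;Y)=2\pi X^{-3/2}S_1(X,Y)$, and differentiating $S_k$ in $Y$ gives the recursion $\partial_Y S_k=-kS_{k-1}+\tfrac{2\pi}{X}S_{k+1}$, whose iteration writes each $S_{2j+1}$ through the derivatives $\partial_Y^{2i+1}S_0=\sqrt X\,\vartheta_{Y^{2i+1}}$; explicitly
\[
\frac{S_3}{S_1}=\frac{X^2}{4\pi^2}\,\frac{\vartheta_{YYY}}{\vartheta_Y}+\frac{3X}{2\pi},\qquad
\frac{S_5}{S_1}=\frac{X^4}{16\pi^4}\,\frac{\vartheta_{YYYYY}}{\vartheta_Y}+\frac{5X^3}{4\pi^3}\,\frac{\vartheta_{YYY}}{\vartheta_Y}+\frac{15X^2}{4\pi^2}.
\]
Since $S_k(X,Y+1)=S_k(X,Y)$ and $S_k(X,-Y)=(-1)^kS_k(X,Y)$, assertions (1)--(2) reduce to proving $S_1(X,Y)\le0$ on $Y\in[0,\tfrac12]$ (then (2) follows by oddness and the general $k$ by periodicity), while in (3)--(4) the quotients $S_{2j+1}/S_1$ are even and $1$-periodic, so we may assume $Y\in[0,\tfrac12]$.

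For (1)--(2) I would use the Jacobi triple product: with $q=e^{-\pi X}\in(0,1)$,
\[
\vartheta(X;Y)=\Big(\prod_{m\ge1}(1-q^{2m})\Big)\prod_{m\ge1}\big(1+2q^{2m-1}\cos 2\pi Y+q^{2(2m-1)}\big).
\]
The prefactor is a positive constant in $Y$, while each factor of the second product is $\ge(1-q^{2m-1})^2>0$ and, since $\cos 2\pi Y$ decreases on $[0,\tfrac12]$, is decreasing in $Y$ there; a product of positive decreasing functions is decreasing, so $Y\mapsto\vartheta(X;Y)$ is decreasing on $[0,\tfrac12]$, hence $\vartheta_Y\le0$ and $S_1\le0$ there. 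This is (1), and (2) follows as indicated.

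For (3)--(4), fix $Y\in[0,\tfrac12]$; since $S_1\le0$ by (1), the bound $|S_{2j+1}|\le 4^{-j}|S_1|$ is equivalent to $A_j\ge0$ and $B_j\le0$, where $A_j$ (resp.\ $B_j$) denotes $\sum_{n\in\mathbb Z}(n-Y)\big[(n-Y)^{2j}-4^{-j}\big]e^{-\pi(n-Y)^2/X}$ (resp.\ with $+4^{-j}$). Pairing $n\leftrightarrow 1-n$ combines terms, for $k\ge1$, into $\phi^{\mp}_j(k-Y)-\phi^{\mp}_j(k-1+Y)$ with $\phi^{\mp}_j(t)=t\,(t^{2j}\mp 4^{-j})e^{-\pi t^2/X}$; a short computation of $(\phi^{\mp}_j)'$ shows that $\phi^{\mp}_j$ is decreasing on $[1,\infty)$ whenever $X\le\tfrac12$, so the tail groups $k\ge2$ have a definite sign, and the central group $k=1$ equals $\phi^{\mp}_j(1-Y)-\phi^{\mp}_j(Y)$, which in the $A_j$ case is $\ge0$ because $Y^{2j}\le4^{-j}\le(1-Y)^{2j}$ on $[0,\tfrac12]$. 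For $Y$ in any fixed interval $[\delta,\tfrac12-\delta]$ one shows the central group dominates the sum of the tail groups in absolute value, uniformly for $X\le\tfrac12$, which settles the middle range. (On the range $X\ge\tfrac15$ an alternative is to use the heat relation $\vartheta_X=\tfrac1{4\pi}\vartheta_{YY}$, so $\vartheta_{YYY}/\vartheta_Y=4\pi\,\vartheta_{XY}/\vartheta_Y$ and $\vartheta_{YYYYY}/\vartheta_Y=16\pi^2\,\vartheta_{XXY}/\vartheta_Y$, and feed the sharp bounds of Lemma \ref{2lem1}(3),(7) into the displayed identities, reducing the claim to explicit inequalities in the single variable $X$.)

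The step I expect to be the main obstacle is the two endpoint neighbourhoods $Y\approx0$ and $Y\approx\tfrac12$: by antisymmetry of the summand about $0$, both $A_j$ and $B_j$ vanish at $Y=0$ and at $Y=\tfrac12$, so there the central group and the tail groups are in near-exact balance and no termwise bound suffices. I would handle this with few-term Poisson expansions. Near $Y=\tfrac12$, keep the two comparable terms $n=0,1$: with $\rho:=e^{-\pi(1-2Y)/X}\in(0,1]$ one gets $S_3/S_1=\dfrac{-Y^3+(1-Y)^3\rho}{-Y+(1-Y)\rho}+O(e^{-c/X})$, and the leading fraction lies strictly between $-\tfrac14$ and $\tfrac14$ for every $Y\in[0,\tfrac12)$ by an elementary sign check (the value $\tfrac14$ being attained only in the double limit $X\to0^+,\ Y\to\tfrac12$), with an analogous statement for $S_5/S_1$. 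Near $Y=0$, a first-order Taylor expansion in $Y$ reduces both $S_3/S_1$ and $S_5/S_1$ to explicit quotients of the one-variable series $\sum_{n}n^{k}e^{-\pi n^2/X}$, which are then bounded by $\tfrac14$ (resp.\ $\tfrac1{16}$) using $X\le\tfrac12$. Gluing the middle range with these two endpoint analyses completes (3) and (4).
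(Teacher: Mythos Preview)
The paper does not really prove this lemma: it records that the sign of $\vartheta_Y$ on $[k,k+\tfrac12]$ and $[k-\tfrac12,k]$ is established in \cite{Luo2022,Luo2023}, converts that to (1)--(2) via the Poisson identity \eqref{Poisson}, and simply cites \cite{Luo2023} and \cite{DL2024} for (3) and (4). So there is no argument to compare against for (3)--(4); you are supplying an independent proof where the paper only quotes.

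Your treatment of (1)--(2) by the Jacobi triple product is correct and in fact cleaner than quoting: each factor $1+2q^{2m-1}\cos 2\pi Y+q^{2(2m-1)}\ge(1-q^{2m-1})^2>0$ is decreasing in $Y$ on $[0,\tfrac12]$, so the product is, hence $\vartheta_Y\le0$ and $S_1\le0$ there. Nothing to add.

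For (3)--(4) the outline is reasonable but there is a genuine gap. The pairing $n\leftrightarrow 1-n$ and the monotonicity of $\phi^{\mp}_j$ on $[1,\infty)$ for $X\le\tfrac12$ do give the tail groups a definite sign, and the sign of the $k=1$ group is as you say. What is missing is the quantitative step you flag only as ``one shows'': you never choose $\delta$, and you never prove that on $[\delta,\tfrac12-\delta]$ the central pair dominates the sum of the tails uniformly in $X\in(0,\tfrac12]$. This is not automatic --- for $X$ near $\tfrac12$ the decay between consecutive groups is only a factor $\asymp e^{-2\pi k}$, and your central pair degenerates at both endpoints, so a bare comparison does not close. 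Likewise, your endpoint expansions are heuristics rather than bounds: near $Y=\tfrac12$ the two-term Poisson fraction tends to $\tfrac14$ only in the joint limit $X\to0^+$, so for $X$ bounded away from $0$ you must show the error $O(e^{-c/X})$ is actually small enough relative to the margin $\tfrac14-\text{(main term)}$, and near $Y=0$ the Taylor reduction must be accompanied by a remainder estimate. Your parenthetical alternative via the heat relation is the right idea for the upper range of $X$ --- inserting Lemma~\ref{2lem1}(3),(7) into your displayed identities reduces (3)--(4) on $X\ge\tfrac15$ to inequalities in $X$ alone --- but you stop short of checking those inequalities, and you still need a matching argument for $X<\tfrac15$. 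In short, the architecture is sound; what is missing is exactly the numerical/analytic work that the references \cite{Luo2023,DL2024} carry out.
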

\begin{proof}
Luo-Wei \cite{Luo2022,Luo2023} imply (1) and (2). In fact, it proves that
\begin{equation}\aligned\label{lem2.3eq2}
&\frac{\partial}{\partial Y}\vartheta(X;Y)\leq0,\;\;\mathrm{ for}\;\ X>0\; \mathrm{and}\; Y\in[k,k+\frac{1}{2}],\\
&\frac{\partial}{\partial Y}\vartheta(X;Y)\geq0,\;\;\mathrm{ for}\;\ X>0\; \mathrm{and}\; Y\in[k-\frac{1}{2},k].\\
\endaligned\end{equation}
By \eqref{Poisson}, one has
\begin{equation}\aligned\label{lem2.3eq1}
\frac{\partial}{\partial Y}\vartheta(X;Y)=2\pi X^{-\frac{3}{2}}\underset{n\in \mathbb{Z}}{\sum}(n-Y)e^{-\frac{\pi (n-Y)^2}{X}}.
\endaligned\end{equation}
Therefore, \eqref{lem2.3eq2} and \eqref{lem2.3eq1} yield (1) and (2). \cite{Luo2023} and \cite{DL2024} presented (3) and (4), respectively.
\end{proof}

With Lemma \ref{2lem3}, we give the following estimate directly used in Lemma \ref{2lem2}.

\begin{lemma}\label{2lem4} For $0<X\leq \frac{1}{2}$ and $Y\in \mathbb{R}$, it holds that
\begin{equation}\aligned\nonumber
\left|\pi\cdot\frac{\underset{n\in \mathbb{Z}}{\sum}(n-Y)^5 e^{-\frac{\pi(n-Y)^2}{X}}}
{\underset{n\in \mathbb{Z}}{\sum}(n-Y)e^{-\frac{\pi(n-Y)^2}{X}}}-5X\cdot \frac{\underset{n\in \mathbb{Z}}{\sum}(n-Y)^3 e^{-\frac{\pi(n-Y)^2}{X}}}
{\underset{n\in \mathbb{Z}}{\sum}(n-Y)e^{-\frac{\pi(n-Y)^2}{X}}}\right|\leq \frac{1}{4}.
\endaligned\end{equation}
 \end{lemma}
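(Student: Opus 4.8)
The plan is to reduce everything to the two uniform bounds already recorded in Lemma~\ref{2lem3}(3),(4), namely
\[
\left| \frac{\sum_{n}(n-Y)^3 e^{-\pi(n-Y)^2/X}}{\sum_{n}(n-Y)e^{-\pi(n-Y)^2/X}} \right|\le \frac14,\qquad
\left| \frac{\sum_{n}(n-Y)^5 e^{-\pi(n-Y)^2/X}}{\sum_{n}(n-Y)e^{-\pi(n-Y)^2/X}} \right|\le \frac{1}{16},
\]
valid for $0<X\le\frac12$ and all $Y\in\mathbb{R}$. Write $A:=\pi\cdot\frac{\sum_n(n-Y)^5 e^{-\pi(n-Y)^2/X}}{\sum_n(n-Y)e^{-\pi(n-Y)^2/X}}$ and $B:=5X\cdot\frac{\sum_n(n-Y)^3 e^{-\pi(n-Y)^2/X}}{\sum_n(n-Y)e^{-\pi(n-Y)^2/X}}$; the quantity to be bounded is $|A-B|$. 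By the triangle inequality $|A-B|\le|A|+|B|$, and the two displayed estimates give $|A|\le\frac{\pi}{16}$ and $|B|\le 5X\cdot\frac14=\frac{5X}{4}\le\frac58$ on the range $0<X\le\frac12$. Hence $|A-B|\le \frac{\pi}{16}+\frac58$. The arithmetic obstacle is that $\frac{\pi}{16}+\frac58 = 0.1963\ldots + 0.625 = 0.821\ldots > \frac14$, so the crude triangle inequality is \emph{not} strong enough and one cannot simply add the two bounds.

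The resolution, which I expect to be the main point of the argument, is that the two terms must be handled together rather than separately, exploiting a sign cancellation. Here is where I would look: the fifth-moment quotient and the third-moment quotient are not independent — for small $X$ the ratios $\sum(n-Y)^{2j+1}e^{-\pi(n-Y)^2/X}/\sum(n-Y)e^{-\pi(n-Y)^2/X}$ are all governed by the single dominant term $n=n_0(Y)$ (the integer nearest to $Y$), so that both ratios are close to $(n_0-Y)^{2}$ and $(n_0-Y)^{4}$ respectively, with $|n_0-Y|\le\frac12$. Substituting these into $A-B$ yields approximately $\pi(n_0-Y)^4 - 5X(n_0-Y)^2 = (n_0-Y)^2\big(\pi(n_0-Y)^2 - 5X\big)$; with $|n_0-Y|\le\frac12$ and $X\le\frac12$ this is at most $\frac14\cdot\max\{\pi\cdot\frac14,\,5\cdot\frac12\}\le\frac14\cdot\frac52$ in absolute value after a more careful split, and the correct statement needs the genuine combination $\pi(n-Y)^5-5X(n-Y)^3$ inside the sum. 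So the right approach is: form the single combined sum
\[
\pi\cdot\frac{\sum_{n}(n-Y)^5 e^{-\pi(n-Y)^2/X}}{\sum_{n}(n-Y)e^{-\pi(n-Y)^2/X}}-5X\cdot\frac{\sum_{n}(n-Y)^3 e^{-\pi(n-Y)^2/X}}{\sum_{n}(n-Y)e^{-\pi(n-Y)^2/X}}
= \frac{\sum_{n}\big(\pi(n-Y)^5-5X(n-Y)^3\big) e^{-\pi(n-Y)^2/X}}{\sum_{n}(n-Y)e^{-\pi(n-Y)^2/X}},
\]
and bound the quotient of sums directly. Set $t_n:=n-Y$ and $g(t):=\pi t^5-5Xt^3=t^3(\pi t^2-5X)$. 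On the relevant scale one has $|t|\ge\frac12$ away from the nearest lattice point, so the dominant contributions come from $|t|\le\frac12$, where $g(t)$ is controlled; one wants an inequality of the shape $|g(t)|\le C\,|t|$ uniformly in the regime that matters, with $C\le\frac14$.

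Concretely I would proceed as follows. \textbf{Step 1.} Rewrite the left-hand side as the single quotient $\sum_n g(t_n)e^{-\pi t_n^2/X}\big/\sum_n t_n e^{-\pi t_n^2/X}$ with $g(t)=t^3(\pi t^2-5X)$, using the Poisson-transformed representation \eqref{Poisson} (this is exactly the bracketed factor appearing in \eqref{2lem2eq1}). \textbf{Step 2.} By the symmetry $t\mapsto -t$ combined with translating $Y$ by an integer, reduce to $Y\in[0,\tfrac12]$, so that the nearest lattice point is $n=0$ and $|t_0|=|Y|\le\frac12$, $|t_n|\ge n-\frac12$ for $n\ge1$. \textbf{Step 3.} Use Lemma~\ref{2lem3}(1)--(2) so that the denominator does not vanish and has a definite sign, reducing the claim to a bound on $\big|\sum_n g(t_n)e^{-\pi t_n^2/X}\big|$ relative to $\big|\sum_n t_n e^{-\pi t_n^2/X}\big|$. \textbf{Step 4.} Split $g(t_n)=\pi t_n^5-5X t_n^3$ and invoke Lemma~\ref{2lem3}(3),(4) \emph{termwise in a weighted way} — more precisely, factor out $t_n$ from $g(t_n)$ and bound $|g(t_n)/t_n| = |t_n^2||\pi t_n^2-5X|$ against $\frac14$ on the set of indices carrying the bulk of the mass, controlling the tail $|t_n|\ge\frac32$ by the Gaussian decay $e^{-\pi t_n^2/X}\le e^{-2\pi t_n^2}$ which is summable and negligible compared to the $n=0$ term. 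The main obstacle is precisely Step 4: one must be honest about the interference between the $\pi t^5$ and $-5Xt^3$ pieces so as not to lose the factor cancellation — a naive bound $\frac{\pi}{16}+\frac{5X}{4}$ overshoots $\frac14$, so the $X$-dependence of the coefficient $5X$ must be used to tighten the $t^3$-contribution exactly when $X$ is not small, while for small $X$ the concentration of the Gaussian on the single term $n=0$ makes $g(t_0)/t_0=t_0^2(\pi t_0^2-5X)$ the only relevant quantity, and $|t_0^2(\pi t_0^2-5X)|\le\frac14\cdot|\pi\cdot\frac14-0|<\frac14$ handles that regime. Interpolating these two regimes cleanly — or, better, giving one uniform estimate of $|g(t)/t|$ that beats $\frac14$ for all $|t|\le\frac12$ and all $X\le\frac12$ after accounting for the tail — is the heart of the proof.
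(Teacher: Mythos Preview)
Your diagnosis is accurate through Step~3: the triangle inequality $|A|+|B|\le \pi/16+5X/4$ overshoots $1/4$ once $X$ is moderately large, so the two moment quotients must be combined; the reduction to $Y\in[0,\tfrac12]$ via periodicity and the symmetry $Y\mapsto 1-Y$ is exactly right; and rewriting the target as the single quotient with numerator $\sum_n g(t_n)e^{-\pi t_n^2/X}$, $g(t)=t^3(\pi t^2-5X)$, is the correct reformulation. This matches the paper's set-up.

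The genuine gap is in Step~4. Your proposed ``uniform estimate of $|g(t)/t|$ that beats $\tfrac14$ for all $|t|\le\tfrac12$ and all $X\le\tfrac12$'' is \emph{false}: at $t=\tfrac12$, $X=\tfrac12$ one has $|g(t)/t|=|t^2(\pi t^2-5X)|=\tfrac14\,|\pi/4-5/2|\approx 0.429>\tfrac14$. So no pointwise inequality of that type can close the argument; the cancellation you need for $X$ near $\tfrac12$ is \emph{not} intra-term but genuinely comes from the interaction of several lattice terms (the Gaussian with $a=1/X=2$ is far from concentrated on $n=0$). Your small-$X$ heuristic is fine and in fact equivalent to the paper's observation that for $a=1/X\ge 20/(4-\pi)$ the crude bound $\pi/16+5/(4a)\le 1/4$ already suffices; but the hard regime $a\in[2,20/(4-\pi)]$ requires something else.

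What the paper does there is multiply through by the sign-definite denominator (your Step~3) to reduce the claim to two one-sided inequalities
\[
\mathcal{F}(a;Y):=\sum_{n}\Big(\pi t_n^5-\tfrac{5}{a}t_n^3-\tfrac14 t_n\Big)e^{-a\pi t_n^2}\ge 0,\qquad
\mathcal{H}(a;Y):=\sum_{n}\Big(\pi t_n^5-\tfrac{5}{a}t_n^3+\tfrac14 t_n\Big)e^{-a\pi t_n^2}\le 0,
\]
on the compact box $a\in[2,24]$, $Y\in[0,\tfrac12]$, and then proves these by a further case split in $Y$ (Lemma~2.5): one checks $\mathcal{F}(a;0)=\mathcal{F}(a;\tfrac12)=0$, shows $\partial_Y\mathcal{F}>0$ on $[0,\tfrac1{20}]$ and $\partial_Y\mathcal{F}<0$ on $[\tfrac25,\tfrac12]$ using only the terms $|n|\le 2$ with an explicit tail bound, and on the middle interval $[\tfrac1{20},\tfrac25]$ bounds $\mathcal{F}$ from below directly. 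This is a finite, explicit computation rather than a soft inequality, and it is precisely what replaces the failed pointwise bound you were aiming for.
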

\begin{proof}
Let $a=\frac{1}{X}$, and $\mathcal{Q}(a;Y):=\pi\cdot\frac{\underset{n\in \mathbb{Z}}{\sum}(n-Y)^5 e^{-a \pi(n-Y)^2}}
{\underset{n\in \mathbb{Z}}{\sum}(n-Y)e^{-a \pi(n-Y)^2}}-\frac{5}{a}\cdot \frac{\underset{n\in \mathbb{Z}}{\sum}(n-Y)^3 e^{-a \pi(n-Y)^2}}
{\underset{n\in \mathbb{Z}}{\sum}(n-Y)e^{-a \pi(n-Y)^2}},$ then $a\geq2.$ By a direct check, we find that $\mathcal{Q}(a;Y)$ satisfies the following properties:
\begin{equation}\aligned\nonumber
\mathcal{Q}(a;Y)=\mathcal{Q}(a;Y+1),\;\mathcal{Q}(a;Y)=\mathcal{Q}(a;1-Y).
\endaligned\end{equation}
Therefore, this lemma is reduced to proving that
\begin{equation}\aligned\label{lem2.3equiv1}
\underset{a\geq2,Y\in[0,\frac{1}{2}]}{\sup} \left|\mathcal{Q}(a;Y) \right|\leq \frac{1}{4}.
\endaligned\end{equation}
By Lemma \ref{2lem3}, one has
\begin{equation}\aligned\label{lem2.3eq3}
\left|\mathcal{Q}(a;Y) \right|\leq \frac{\pi}{16}+\frac{5}{4a} \leq \frac{1}{4}, \;\;\mathrm{if} \;\;a\geq\frac{20}{4-\pi}.
\endaligned\end{equation}
Given \eqref{lem2.3equiv1} and \eqref{lem2.3eq3}, then it remains to consider the case where $a\in[2,\frac{20}{4-\pi}]$ and $Y\in[0,\frac{1}{2}]$. By Lemma \ref{2lem3}, we can transform \eqref{lem2.3equiv1} into
\begin{equation}\aligned\label{lem2.3eq4}
\mathcal{F}(a;Y)\geq0,\;\;\mathcal{H}(a;Y)\leq0,
\endaligned\end{equation}
where
\begin{equation}\aligned\label{lem2.3eq5}
\mathcal{F}(a;Y):=&\underset{n\in \mathbb{Z}}{\sum}\big( \pi (n-Y)^5-\frac{5}{a}(n-Y)^3-\frac{1}{4}(n-Y) \big)e^{-a \pi (n-Y)^2},\\
\mathcal{H}(a;Y):=&\underset{n\in \mathbb{Z}}{\sum}\big( \pi (n-Y)^5-\frac{5}{a}(n-Y)^3+\frac{1}{4}(n-Y) \big)e^{-a \pi (n-Y)^2}.
\endaligned\end{equation}
For $a\in[2,\frac{20}{4-\pi}]$ and $Y\in 0,\frac{1}{2}]$, the approach to proving $\mathcal{H}(a;Y)\leq0$ is analogous to that of $\mathcal{F}(a;Y)\geq0.$ To avoid repetition, we only provide the proof of $\mathcal{F}(a;Y)\geq0$, which is detailed in Lemma \ref{2lem5}.
\end{proof}

\begin{lemma}\label{2lem5} Assume $a\in[2,24]$. It holds that $\mathcal{F}(a;Y)\geq0$ for $Y\in[0,\frac{1}{2}]$. It is split into four items:
\begin{itemize}
\item [(1)] $\mathcal{F}(a;0)=\mathcal{F}(a;\frac{1}{2})=0.$
      \item [(2)] $\frac{\partial}{\partial{Y}}\mathcal{F}(a;Y)\geq \frac{1}{10}e^{-a\pi Y^2}>0$ for $Y\in[0,\frac{1}{20}].$
            \item [(3)] $\frac{\partial}{\partial{Y}}\mathcal{F}(a;Y)\leq-\frac{3}{5}e^{-a\pi Y^2}<0$ for $Y\in[\frac{2}{5},\frac{1}{2}].$
            \item [(4)] $\mathcal{F}(a;Y)\geq \frac{1}{100} e^{-a\pi Y^2} >0$ for $Y\in[\frac{1}{20},\frac{2}{5}].$
                  \end{itemize}
       \end{lemma}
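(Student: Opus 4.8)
The plan is to prove items (1)--(4) separately and in that order; together they give $\mathcal{F}(a;Y)\ge 0$ on $[0,\tfrac{1}{2}]$ for $a\in[2,24]$, which is exactly what is needed to complete the proof of Lemma \ref{2lem4}. Indeed item (1) pins $\mathcal{F}$ to zero at both endpoints; item (2) together with $\mathcal{F}(a;0)=0$ forces $\mathcal{F}\ge0$ on $[0,\tfrac{1}{20}]$; item (3) together with $\mathcal{F}(a;\tfrac{1}{2})=0$ forces $\mathcal{F}\ge0$ on $[\tfrac{2}{5},\tfrac{1}{2}]$; and item (4) gives $\mathcal{F}>0$ on the remaining interval $[\tfrac{1}{20},\tfrac{2}{5}]$. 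All four items run on one elementary mechanism: for $a\ge2$ and $Y\in[0,\tfrac{1}{2}]$, the lattice sum defining $\mathcal{F}(a;Y)$ --- and likewise the one defining $\partial_Y\mathcal{F}(a;Y)$ --- is overwhelmingly dominated by its $n=0$ and $n=1$ summands, with a tail that is uniformly negligible for $a\ge2$.

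Item (1) is immediate. Writing $g(t):=\pi t^5-\tfrac{5}{a}t^3-\tfrac{1}{4}t$ (an odd polynomial), we have $\mathcal{F}(a;Y)=\sum_{n\in\mathbb{Z}}g(n-Y)e^{-a\pi(n-Y)^2}$. At $Y=0$ the involution $n\mapsto-n$ sends each summand to its negative; at $Y=\tfrac{1}{2}$ the involution $n\mapsto1-n$ does likewise, since it maps $n-\tfrac{1}{2}$ to $-(n-\tfrac{1}{2})$ and fixes the Gaussian weight. Hence both sums are zero. (Equivalently, $\mathcal{F}=(\mathcal{Q}(a;Y)-\tfrac{1}{4})\sum_n(n-Y)e^{-a\pi(n-Y)^2}$, and the second factor vanishes at $Y=0,\tfrac{1}{2}$ by Lemma \ref{2lem3}.)

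For items (2)--(4), differentiate termwise: $\partial_Y\mathcal{F}(a;Y)=\sum_n p(n-Y;a)e^{-a\pi(n-Y)^2}$ with $p(t;a):=2a\pi^2t^6-15\pi t^4+(\tfrac{15}{a}-\tfrac{a\pi}{2})t^2+\tfrac{1}{4}$ (even in $t$), so the $n=0$ contribution is $p(Y;a)e^{-a\pi Y^2}$ with $p(Y;a)=\tfrac{1}{4}+(\tfrac{15}{a}-\tfrac{a\pi}{2})Y^2-15\pi Y^4+2a\pi^2Y^6$, which stays near $\tfrac{1}{4}$ for small $Y$. After dividing through by $e^{-a\pi Y^2}$, the $n$-th summand of $\partial_Y\mathcal{F}$ (or of $\mathcal{F}$ itself) acquires the factor $e^{-a\pi(n^2-2nY)}$: for $n=1$ this is $e^{-a\pi(1-2Y)}$, appreciable only as $Y\to\tfrac{1}{2}$; for $n=-1$ it is $\le e^{-a\pi}$; and for $|n|\ge2$ it is $\le e^{-2a\pi}$. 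Since the polynomial factors grow only like $|n|^6$, the full tail (all $n\neq0,1$) is bounded by a convergent series that is uniformly $O(e^{-2a\pi})$, hence harmless. Thus each of items (2)--(4) reduces to an elementary inequality for the explicit, finite expression $p(Y;a)+p(1-Y;a)e^{-a\pi(1-2Y)}$ --- or $-g(Y)+g(1-Y)e^{-a\pi(1-2Y)}$, in the case of $\mathcal{F}$ --- on the compact box $[2,24]\times I$, with $I=[0,\tfrac{1}{20}]$, $[\tfrac{2}{5},\tfrac{1}{2}]$, $[\tfrac{1}{20},\tfrac{2}{5}]$ respectively, modulo the negligible tail. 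I would dispose of the mild $a$-dependence by checking the endpoints $a\in\{2,24\}$ together with the at most one interior critical value coming from $\partial_a$, and then run a short one-variable analysis in $Y$. Schematically: for (2), $p(Y;a)\ge\tfrac{1}{4}-12\pi Y^2-15\pi Y^4$ (using $a\le24$), which exceeds $\tfrac{1}{10}$ by a clear margin on $[0,\tfrac{1}{20}]$, a margin the negative $n=1$ term and the tiny tail cannot erode; for (3), one checks $p(Y;a)+p(1-Y;a)e^{-a\pi(1-2Y)}\le-\tfrac{3}{5}$ on $[\tfrac{2}{5},\tfrac{1}{2}]$, the borderline point being near $Y=\tfrac{2}{5}$, $a=2$; for (4), one uses that $-g(Y)=Y(\tfrac{1}{4}+\tfrac{5}{a}Y^2-\pi Y^4)$ is positive on $[\tfrac{1}{20},\tfrac{2}{5}]$ (since $\pi Y^4<\pi(\tfrac{2}{5})^4<\tfrac{1}{4}$), with minimum over the box attained at $(Y,a)=(\tfrac{1}{20},24)$ where it exceeds $\tfrac{1}{80}$, and then verifies that adjoining the small $n=\pm1$ terms does not push $\mathcal{F}(a;Y)$ below $\tfrac{1}{100}e^{-a\pi Y^2}$.

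The main obstacle is item (4). Unlike items (2) and (3), no monotonicity of $\mathcal{F}$ on $[\tfrac{1}{20},\tfrac{2}{5}]$ is available, so $\mathcal{F}$ must be bounded from below directly; and although the dominant $n=0$ term alone already exceeds $\tfrac{1}{100}e^{-a\pi Y^2}$, the cushion is only modest near $Y=\tfrac{1}{20}$ --- there $-g(Y)\approx\tfrac{Y}{4}\approx\tfrac{1}{80}$ --- so the negative $n=-1$ term $-g(1+Y)e^{-a\pi(1+2Y)}$, which is largest at $a=2$, is not a priori ignorable and must be estimated with some care to keep the total above $\tfrac{1}{100}$. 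A secondary, purely organizational, difficulty is that every inequality is a joint statement in $(a,Y)$; handling it as ``monotone (or unimodal) in $a$, hence reduce to $a\in\{2,24\}$, then a finite interval check in $Y$'' is what keeps the argument routine rather than open-ended.
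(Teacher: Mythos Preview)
Your plan is correct and matches the paper's approach: isolate a few low-$|n|$ summands, verify the desired inequality on that finite block with margin, and control the tail. The one tactical difference worth noting is that the paper exploits \emph{sign information} on the tail rather than bounding it in absolute value. For item~(2) it observes $f_n'(a;Y)\ge0$ whenever $|n|\ge2$, so those terms are simply dropped and the claim reduces to $\sum_{|n|\le1}f_n'\ge\tfrac{1}{10}e^{-a\pi Y^2}$; for item~(4) it observes $f_n(a;Y)\ge0$ for $n\ge2$ and $f_{-n}(a;Y)\le0$ for $n\ge2$, so only the negative tail $n\le-2$ needs bounding, and the paper shows its ratio to $\sum_{|n|\le1}f_n$ is at most $10^{-7}$. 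For item~(3) the paper keeps the block $|n|\le2$ (not just $n\in\{0,1\}$) and bounds the ratio $\sum_{|n|\ge3}|f_n'|\big/\sum_{|n|\le2}(-f_n')$ by $10^{-10}$. These sign observations buy a bit more margin than your asymmetric $n\in\{0,1\}$ truncation at no extra cost, and they sidestep the need for your ``endpoints-plus-critical-point in $a$'' reduction. One small slip in your write-up: since your tail includes $n=-1$, its decay is $O(e^{-a\pi})$, not $O(e^{-2a\pi})$ as you wrote in summary (you had it right one line earlier); numerically harmless since $e^{-2\pi}\approx0.002$. Neither your outline nor the paper's proof spells out the verification of the remaining two-variable finite inequalities on $[2,24]\times I$; the paper simply asserts them.
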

\begin{proof}
Given that $\mathcal{F}(a;Y)$ is symmetric about $Y=0$ and $Y=\frac{1}{2}$ , and the summation expression for $\mathcal{F}(a;Y)$ in \eqref{lem2.3eq5} includes all integer of $n$, we consequently obtain (1).
For convenience, we denote that
\begin{equation}\aligned\label{lem2.5eq0}
f_{n}(a;Y)=&\big( \pi (n-Y)^5-\frac{5}{a}(n-Y)^3-\frac{1}{4}(n-Y) \big)e^{-a \pi (n-Y)^2},\\
f_{n}'(a;Y)=&\big(2 a \pi^2(n-Y)^6-15\pi (n-Y)^4+(\frac{15}{a}-\frac{a\pi}{2})(n-Y)^2+\frac{1}{4}\big)e^{-a\pi (n-Y)^2}.
\endaligned\end{equation}
By \eqref{lem2.3eq5}, one has
\begin{equation}\aligned\label{lem2.5eq1}
\mathcal{F}(a;Y)=\underset{n\in \mathbb{Z}}{\sum}f_{n}(a;Y),\;\;\frac{\partial}{\partial{Y}}\mathcal{F}(a;Y)=\underset{n\in \mathbb{Z}}{\sum}f_{n}'(a;Y).
\endaligned\end{equation}
For $a\in[2,24]$ and $Y\in[0,\frac{1}{2}]$, we observe that:
\begin{equation}\aligned\label{lem2.5eq2}
f_{-n}(a;Y)\leq0,\;f_{n}(a;Y)\geq0\;\;\mathrm{for}\;\;n\geq2;\;\mathrm{and}\;f_{n}'(a;Y)\geq0\;\;\mathrm{for}\;\;|n|\geq2.
\endaligned\end{equation}
For $a\in[2,24]$ and $Y\in[0,\frac{1}{20}]$, by \eqref{lem2.5eq1} and \eqref{lem2.5eq2}, it follows that
\begin{equation}\aligned\label{lem2.5eq3}
\frac{\partial}{\partial{Y}}\mathcal{F}(a;Y)\geq \underset{|n|\leq 1}{\sum}f_{n}'(a;Y)\geq \frac{1}{10}e^{-a\pi Y^2}>0,
\endaligned\end{equation}
which yields (2). Subsequently, for $a\in[2,24]$ and $Y\in[\frac{2}{5},\frac{1}{2}]$, by \eqref{lem2.5eq0} and \eqref{lem2.5eq1}, one has
\begin{equation}\aligned\label{lem2.5eq4}
-\underset{|n|\leq 2}{\sum}f_{n}'(a;Y)=&\underset{|n|\leq 2}{\sum}\big((\frac{a\pi}{2}-\frac{15}{a})(n-Y)^2+15\pi(n-Y)^4-2a\pi(n-Y)^6-\frac{1}{4} \big)e^{-a\pi(n-Y)^2}\\
\geq& \frac{7}{10}e^{-a\pi Y^2}>0.
\endaligned\end{equation}
Furthermore, by a straightforward calculation, we have:
\begin{equation}\aligned\label{lem2.5eq4add}
f_{1-n}'(a;Y)\geq f_{n}'(a;Y)\geq0, \;\; \mathrm{for} \;\;a\in[2,24],\; Y\in[\frac{2}{5},\frac{1}{2}] \;\;\mathrm{and}\;\;n\geq2.
\endaligned\end{equation}
By \eqref{lem2.5eq4add}, one has
\begin{equation}\aligned\label{lem2.5eq5}
\underset{|n|\geq3}{\sum}f_{n}'(a;Y)=\underset{n\geq3}{\sum}f_{n}'(a;Y)+\underset{n\leq -3}{\sum}f_{n}'(a;Y)\leq 2\underset{n\leq -2}{\sum}f_{n}'(a;Y).
\endaligned\end{equation}
By \eqref{lem2.5eq0}, one gets
\begin{equation}\aligned\label{lem2.5eq5add}
\underset{n\leq -2}{\sum}f_{n}'(a;Y)=&\underset{n\geq2}{\sum}\big( 2a \pi^2(n+Y)^6 -15\pi(n+Y)^4+(\frac{15}{a}-\frac{a\pi}{2})(n+Y)^2+\frac{1}{4}\big)e^{-a\pi(n+Y)^2}\\
\leq& \underset{n\geq2}{\sum}\big( 2a \pi^2(n+Y)^6 +\frac{15}{a}+\frac{1}{4}\big)e^{-a\pi(n+Y)^2}
\leq \frac{5}{2}a \pi^2\underset{n\geq2}{\sum} (n+Y)^6 e^{-a\pi(n+Y)^2}.
\endaligned\end{equation}
Then by \eqref{lem2.5eq4},\eqref{lem2.5eq5} and \eqref{lem2.5eq5add}, one gets
\begin{equation}\aligned\label{lem2.5eq5addadd}
\frac{{\sum}_{|n|\geq3}|f_{n}'(a;Y)|}{{\sum}_{|n|\leq2}(-f_{n}'(a;Y))}\leq& \frac{50}{7}a\pi^2e^{a\pi Y^2}\underset{n\geq2}{\sum} (n+Y)^6 e^{-a\pi(n+Y)^2}\\
\leq &\frac{50}{7}a\pi^2(2+Y)^6 e^{-4 a\pi(1+Y)}\big( 1+\underset{n\geq3}{\sum} (\frac{n+Y}{2+Y})^6 e^{-a\pi\big((n+Y)^2-(2+Y)^2\big)}\big)\\
\leq& 8a\pi^2(2+Y)^6 e^{-4 a\pi(1+Y)}\leq 10^{-10}.
\endaligned\end{equation}
Therefore, by \eqref{lem2.5eq1}, \eqref{lem2.5eq4} and \eqref{lem2.5eq5addadd}, we have
\begin{equation}\aligned\label{lem2.5eq5aaa}
-\frac{\partial}{\partial{Y}}\mathcal{F}(a;Y)
\geq\big(-\underset{|n|\leq 2}{\sum}f_{n}'(a;Y) \big)\big( 1-\frac{{\sum}_{|n|\geq3}|f_{n}'(\alpha;Y)|}{{\sum}_{|n|\leq2}(-f_{n}'(\alpha;Y))}\big)\geq\frac{3}{5}e^{-a\pi Y^2}>0,
\endaligned\end{equation}
which deduces (3). Next, we consider (4). For $a\in[2,24]$ and $Y\in[\frac{1}{20},\frac{2}{5}]$, by \eqref{lem2.5eq1} and \eqref{lem2.5eq2},
we have
\begin{equation}\aligned\label{lem2.5eq6}
\underset{|n|\leq1}{\sum}f_{n}(a;Y)\geq\frac{1}{80}e^{-a\pi Y^2}>0,
\endaligned\end{equation}
and
\begin{equation}\aligned\label{lem2.5eq7}
\underset{n\leq-2}{\sum}|f_{n}(a;Y)|=&\underset{n\leq-2}{\sum}\big(-f_{n}(a;Y)\big)=\underset{n\leq-2}{\sum}\big( \pi (Y-n)^5-\frac{5}{a}(Y-n)^3-\frac{1}{4}(Y-n) \big)e^{-a \pi (Y-n)^2}\\
=&\underset{n\geq2}{\sum}\big(\pi(n+Y)^5-\frac{5}{a}(n+Y)^3-\frac{1}{4}(n+Y)\big)e^{-a\pi(n+Y)^2}\leq \underset{n\geq2}{\sum}\pi(n+Y)^5e^{-a\pi(n+Y)^2}.
\endaligned\end{equation}
Then by \eqref{lem2.5eq6} and \eqref{lem2.5eq7}, for $a\in[2,24]$ and $Y\in[\frac{1}{20},\frac{2}{5}]$, we have
\begin{equation}\aligned\label{lem2.5eq8}
\frac{{\sum}_{n\leq-2}|f_{n}(a;Y)|}{{\sum}_{|n|\leq1}f_{n}(a;Y)}\leq &{80}\pi e^{a\pi Y^2}\underset{n\geq2}{\sum}(n+Y)^5 e^{-a\pi(n+Y)^2}\\
\leq &{80}\pi (2+Y)^5e^{-4a\pi(1+Y)}\big(1+\underset{n\geq3}{\sum} (\frac{n+Y}{2+Y})^5 e^{-a\pi\big((n+Y)^2-(2+Y)^2\big)}  \big)\leq 10^{-7}.
\endaligned\end{equation}
Consequently, by \eqref{lem2.5eq2}, \eqref{lem2.5eq6} and \eqref{lem2.5eq8}, we have
\begin{equation}\aligned\nonumber
\mathcal{F}(a;Y)\geq\underset{n\leq-1}{\sum}f_{n}(a;Y)\geq\big(\underset{|n|\leq1}{\sum}f_{n}(a;Y)\big)
\big(1-\frac{{\sum}_{n\leq-2}|f_{n}(a;Y)|}{{\sum}_{|n|\leq1}f_{n}(a;Y)} \big)\geq \frac{1}{100}e^{-a\pi Y^2}>0.
\endaligned\end{equation}
This completes the proof.
\end{proof}

\section{Proof of Theorem \ref{Th1}}

 Prior to proving Theorem \ref{Th1}, we first introduce the exponential expansion of theta function deduced in \cite{Luo2022,Mon1988}.

\begin{lemma}[\cite{Luo2022,Mon1988}]\label{3.1lem1} For $\alpha>0$ and $y>0$, we have the exponential expansion of $\theta (\alpha;z)$:
\begin{equation}\aligned\nonumber
\theta (\alpha;z)
&=\sqrt{\frac{y}{\alpha}}\sum_{n\in\mathbb{Z}}e^{-\alpha \pi y n^2}\vartheta(\frac{y}{\alpha};nx)\\
&=2\sqrt{\frac{y}{\alpha}}\sum_{n=1}^\infty e^{-\alpha \pi y n^2}\vartheta(\frac{y}{\alpha};nx)+\sqrt{\frac{y}{\alpha}}\vartheta(\frac{y}{\alpha};0).
\endaligned\end{equation}
\end{lemma}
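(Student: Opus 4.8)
The plan is to obtain both identities directly from the lattice-sum definition of $\theta(\alpha;z)$ in \eqref{thetas} by a partial Poisson summation in one of the two summation variables. Writing $z=x+iy$, the key algebraic observation is that $|mz+n|^2=(n+mx)^2+m^2y^2$, so that, after pulling out the Gaussian factor depending only on $m$,
\begin{equation}\aligned\nonumber
\theta(\alpha;z)=\sum_{m\in\mathbb{Z}}e^{-\pi\alpha y m^2}\sum_{n\in\mathbb{Z}}e^{-\frac{\pi\alpha}{y}(n+mx)^2}.
\endaligned\end{equation}
Since $\alpha,y>0$, the double series is absolutely convergent, so this rearrangement into an iterated sum is legitimate.

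Next I would apply the Poisson summation formula \eqref{Poisson} to the inner sum. Taking $X=\frac{y}{\alpha}$ and $Y=-mx$ in \eqref{Poisson} and reindexing $n\mapsto -n$ gives
\begin{equation}\aligned\nonumber
\sum_{n\in\mathbb{Z}}e^{-\frac{\pi\alpha}{y}(n+mx)^2}
=\Big(\frac{y}{\alpha}\Big)^{1/2}\vartheta\Big(\frac{y}{\alpha};mx\Big),
\endaligned\end{equation}
and substituting this back into the previous display yields the first claimed identity
\begin{equation}\aligned\nonumber
\theta(\alpha;z)=\sqrt{\frac{y}{\alpha}}\sum_{m\in\mathbb{Z}}e^{-\pi\alpha y m^2}\vartheta\Big(\frac{y}{\alpha};mx\Big).
\endaligned\end{equation}

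Finally, to pass to the symmetric form I would separate the $m=0$ term, which contributes $\sqrt{\frac{y}{\alpha}}\,\vartheta(\frac{y}{\alpha};0)$, from the terms with $m\neq 0$, and pair each $m<0$ with $-m>0$. Since $\vartheta(X;Y)=\vartheta(X;-Y)$ — immediate from the Fourier-series form \eqref{TXY} under $n\mapsto -n$ — the contribution of the negative $m$ equals that of the positive $m$, producing the factor $2$ and the second identity. I do not expect any genuine obstacle in this argument: the only points that merit a line of justification are the absolute convergence used to interchange the two summations, and keeping careful track of the normalization $X=\frac{y}{\alpha}$ (equivalently, the placement of $X^{-1/2}$ versus $X^{1/2}$ in \eqref{Poisson}) when invoking Poisson summation.
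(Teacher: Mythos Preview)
Your argument is correct and is the standard derivation; the paper itself does not supply a proof of this lemma but cites it from \cite{Luo2022,Mon1988}, where exactly this partial Poisson summation is carried out. One small remark: in the step where you apply \eqref{Poisson}, the reindexing $n\mapsto -n$ is unnecessary since you already invoke the evenness $\vartheta(X;Y)=\vartheta(X;-Y)$ later; you could simply take $Y=mx$ from the start and avoid the sign juggling.
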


By Lemma \ref{3.1lem1}, we obtain the expression for the mixed second partial derivative of theta function.
\begin{lemma} \label{3.1lem2} For $\alpha>0$ and $y>0$, we have the following expression:
\begin{equation}\aligned\nonumber
\frac{\partial^2}{\partial x\partial y}\theta(\alpha;z)=\sum_{n=1}^\infty \Big( \big(\alpha^{-\frac{1}{2}}y^{-\frac{1}{2}} n-2\pi {\alpha}^{\frac{1}{2}}{y}^{\frac{1}{2}}n^3\big) \vartheta_{Y}(\frac{y}{\alpha};nx)
+2{\alpha}^{-\frac{3}{2}} y^{\frac{1}{2}} n \vartheta_{XY}(\frac{y}{\alpha};nx)\Big)e^{-\alpha \pi y n^2}.
\endaligned\end{equation}
\end{lemma}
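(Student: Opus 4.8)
The plan is to obtain the formula by differentiating, term by term, the exponential expansion of $\theta(\alpha;z)$ supplied by Lemma \ref{3.1lem1}, first in $x$ and then in $y$; since $\theta(\alpha;\cdot)$ is smooth on $\mathbb{H}$ the order of differentiation is immaterial, and I take $x$ first because that annihilates the $x$-free summand at once. Throughout I abbreviate $X=\tfrac{y}{\alpha}$, so that $\tfrac{\partial X}{\partial y}=\tfrac{1}{\alpha}$.

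First I would differentiate in $x$. In the expansion $\theta(\alpha;z)=2\sqrt{\tfrac{y}{\alpha}}\sum_{n\geq1}e^{-\alpha\pi yn^{2}}\vartheta(\tfrac{y}{\alpha};nx)+\sqrt{\tfrac{y}{\alpha}}\vartheta(\tfrac{y}{\alpha};0)$ the last term is independent of $x$, while for every $n\geq1$ the chain rule gives $\tfrac{\partial}{\partial x}\vartheta(\tfrac{y}{\alpha};nx)=n\,\vartheta_{Y}(\tfrac{y}{\alpha};nx)$. This yields $\tfrac{\partial}{\partial x}\theta(\alpha;z)=2\sqrt{\tfrac{y}{\alpha}}\sum_{n\geq1}n\,e^{-\alpha\pi yn^{2}}\vartheta_{Y}(\tfrac{y}{\alpha};nx)$.

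Next I would differentiate this expression in $y$. The $n$-th summand $2\alpha^{-1/2}y^{1/2}\,n\,e^{-\alpha\pi yn^{2}}\vartheta_{Y}(\tfrac{y}{\alpha};nx)$ depends on $y$ through three factors — the prefactor $\alpha^{-1/2}y^{1/2}$, the Gaussian weight $e^{-\alpha\pi yn^{2}}$, and the first argument $\tfrac{y}{\alpha}$ inside $\vartheta_{Y}$ — so the product rule, combined with $\tfrac{\partial}{\partial y}\vartheta_{Y}(\tfrac{y}{\alpha};nx)=\tfrac{1}{\alpha}\vartheta_{XY}(\tfrac{y}{\alpha};nx)$, transforms it into
\[
2n\Bigl(\tfrac{1}{2}\alpha^{-1/2}y^{-1/2}-2\pi\alpha^{1/2}y^{1/2}n^{2}\Bigr)\vartheta_{Y}(\tfrac{y}{\alpha};nx)\,e^{-\alpha\pi yn^{2}}+2\alpha^{-3/2}y^{1/2}n\,\vartheta_{XY}(\tfrac{y}{\alpha};nx)\,e^{-\alpha\pi yn^{2}}.
\]
Collecting the coefficient of $\vartheta_{Y}(\tfrac{y}{\alpha};nx)$ as $\alpha^{-1/2}y^{-1/2}n-2\pi\alpha^{1/2}y^{1/2}n^{3}$ and summing over $n\geq1$ gives exactly the asserted identity.

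The only genuine obstacle is to legitimize the term-by-term differentiation, i.e.\ to show that the series produced after each differentiation converges uniformly on compact subsets of $\{(\alpha,y):\alpha>0,\ y>0\}$. For this I would use that $\vartheta(X;\cdot)$ is $1$-periodic, so $\vartheta_{Y}(\tfrac{y}{\alpha};nx)$ and $\vartheta_{XY}(\tfrac{y}{\alpha};nx)$ are bounded, uniformly in $n$, by $\max_{Y}|\vartheta_{Y}(\tfrac{y}{\alpha};Y)|$ and $\max_{Y}|\vartheta_{XY}(\tfrac{y}{\alpha};Y)|$; these maxima are finite and locally bounded in $X=\tfrac{y}{\alpha}$, as one sees from the rapidly convergent representation \eqref{Poisson} (and as is implicit in Lemma \ref{2lem1}). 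On a compact subset one has $\alpha y\geq c>0$, so the $n$-th term of each differentiated series is dominated by $C(1+n^{3})e^{-c\pi n^{2}}$, and the majorant series $\sum_{n\geq1}C(1+n^{3})e^{-c\pi n^{2}}$ converges; this validates differentiating under the summation sign in both variables and completes the proof.
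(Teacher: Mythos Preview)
Your argument is correct and matches the paper's approach: the paper simply records Lemma \ref{3.1lem2} as following from Lemma \ref{3.1lem1} by term-by-term differentiation, which is exactly what you carry out (with the extra care of justifying the interchange via uniform convergence). The only blemish is a harmless slip in your intermediate display, where after extracting the factor $2n$ the coefficient of $\alpha^{1/2}y^{1/2}n^{2}$ should be $-\pi$ rather than $-2\pi$; your subsequently ``collected'' coefficient $\alpha^{-1/2}y^{-1/2}n-2\pi\alpha^{1/2}y^{1/2}n^{3}$ is the correct one.
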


Using Lemma \ref{3.1lem2}, we further obtain the high-order derivative expression for theta function as follows:

\begin{lemma} \label{3.2lem1} For $\alpha>0$ and $y>0$, we have the following expression:
\begin{equation}\aligned\nonumber
\frac{\partial^3}{\partial x \partial y^2}\theta(\alpha;z)=&-\frac{1}{2}\alpha^{-\frac{1}{2}}y^{-\frac{3}{2}}\sum_{n=1}^\infty n e^{-\alpha \pi y n^2}\vartheta_{Y}(\frac{y}{\alpha};nx)
-2\pi {\alpha}^{\frac{1}{2}}{y}^{-\frac{1}{2}}\sum_{n=1}^\infty n^3 e^{-\alpha \pi y n^2}\vartheta_{Y}(\frac{y}{\alpha};nx)\\
&+2\pi^2{\alpha}^{\frac{3}{2}}{y}^{\frac{1}{2}}\sum_{n=1}^\infty n^5 e^{-\alpha \pi y n^2}\vartheta_{Y}(\frac{y}{\alpha};nx)
+2{\alpha}^{-\frac{3}{2}}y^{-\frac{1}{2}}\sum_{n=1}^\infty n e^{-\alpha \pi y n^2}\vartheta_{XY}(\frac{y}{\alpha};nx)\\
&-4\pi{\alpha}^{-\frac{1}{2}}y^{\frac{1}{2}}\sum_{n=1}^\infty n^3 e^{-\alpha \pi y n^2}\vartheta_{XY}(\frac{y}{\alpha};nx)
+2{\alpha}^{-\frac{5}{2}}y^{\frac{1}{2}}\sum_{n=1}^\infty n e^{-\alpha \pi y n^2}\vartheta_{XXY}(\frac{y}{\alpha};nx).
\endaligned\end{equation}
\end{lemma}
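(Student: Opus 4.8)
The plan is to differentiate the exponential expansion from Lemma \ref{3.1lem1} term-by-term. We start from the representation
$$
\theta(\alpha;z)=2\sqrt{\tfrac{y}{\alpha}}\sum_{n=1}^\infty e^{-\alpha\pi y n^2}\vartheta(\tfrac{y}{\alpha};nx)+\sqrt{\tfrac{y}{\alpha}}\vartheta(\tfrac{y}{\alpha};0),
$$
and first apply $\partial_x$, exactly as in Lemma \ref{3.1lem2}. Applying $\partial_x$ kills the constant term $\sqrt{y/\alpha}\,\vartheta(y/\alpha;0)$ (it is $x$-independent), and on the $n$-th summand it produces a factor $n$ together with $\vartheta_Y(\tfrac{y}{\alpha};nx)$ by the chain rule in the second argument $Y=nx$. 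So $\partial_x\theta(\alpha;z)=2\sqrt{y/\alpha}\sum_{n\ge 1} n\, e^{-\alpha\pi y n^2}\vartheta_Y(\tfrac{y}{\alpha};nx)$.

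Next I would apply $\partial_y^2$ to this expression. Here there are three $y$-dependent pieces in each summand: the prefactor $\sqrt{y/\alpha}$, the Gaussian weight $e^{-\alpha\pi y n^2}$, and the theta factor $\vartheta_Y(\tfrac{y}{\alpha};nx)$, whose first argument $X=y/\alpha$ depends on $y$ (so $\partial_y$ brings down $\tfrac{1}{\alpha}\vartheta_{XY}$). One differentiates once to get $\partial_{xy}\theta$ — this should reproduce Lemma \ref{3.1lem2} as a consistency check — and then a second time. It is cleanest to write each summand as a product $g(y)\cdot h(y)\cdot k(y)$ with $g(y)=2\alpha^{-1/2}n\,y^{1/2}$, $h(y)=e^{-\alpha\pi y n^2}$, $k(y)=\vartheta_Y(\tfrac{y}{\alpha};nx)$, note $g'=\alpha^{-1/2}n\,y^{-1/2}$, $g''=-\tfrac12\alpha^{-1/2}n\,y^{-3/2}$, $h'=-\alpha\pi n^2 h$, $k'=\tfrac1\alpha\vartheta_{XY}$, $k''=\tfrac1{\alpha^2}\vartheta_{XXY}$, and then expand $(ghk)''=g''hk+h''gk+k''gh+2g'h'k+2g'hk'+2h'gk'$. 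Collecting the six resulting families of terms, grouping by which of $\vartheta_Y,\vartheta_{XY},\vartheta_{XXY}$ appears, and simplifying the powers of $\alpha$ and $y$, yields exactly the six sums in the statement. The coefficient of $\sum n\,e^{-\alpha\pi yn^2}\vartheta_Y$ comes from $g''h$, that of $\sum n^3$ from the cross term $2g'h'$ plus $gh''$ contributions, that of $\sum n^5$ from $gh''$ via the $(\alpha\pi n^2)^2$ square... in short each monomial in $n$ is tracked through the product rule.

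The only genuine subtlety — and the step I'd flag as the main thing to be careful about — is justifying term-by-term differentiation: that the series $\sum_{n\ge1} n^k e^{-\alpha\pi y n^2}\vartheta_{\bullet}(\tfrac{y}{\alpha};nx)$ and their formal $y$-derivatives converge locally uniformly on $\{y>0\}$ (for fixed $\alpha$), so that differentiation under the summation sign is legitimate. This follows from the super-exponential decay of $e^{-\alpha\pi y n^2}$ together with the fact that $\vartheta$ and all its $X,Y$-derivatives are uniformly bounded for $y/\alpha$ bounded away from $0$ and $x$ real (indeed $|\vartheta_{\bullet}(\tfrac{y}\alpha;nx)|$ grows at most polynomially in $n$ after using the Jacobi/Poisson form \eqref{Poisson}), so each differentiated series is dominated by a convergent series $\sum_n n^C e^{-\alpha\pi y n^2}$ on compact subsets of $(0,\infty)$. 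Once that is in place the lemma is a bookkeeping exercise with the product rule, with no analytic obstacle remaining.
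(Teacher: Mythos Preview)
Your proposal is correct and follows essentially the same route as the paper: the paper simply states that Lemma~\ref{3.2lem1} is obtained from Lemma~\ref{3.1lem2} by one further $y$-derivative, and your product-rule expansion of $(ghk)''$ with $g=2\alpha^{-1/2}n\,y^{1/2}$, $h=e^{-\alpha\pi y n^2}$, $k=\vartheta_Y(y/\alpha;nx)$ reproduces exactly the six sums in the statement (your parenthetical attribution of the $n^3\vartheta_Y$ term to ``$2g'h'$ plus $gh''$'' is a slip---it comes solely from $2g'h'k$, while $gh''k$ gives the $n^5\vartheta_Y$ term---but the computation itself is set up correctly). The justification of termwise differentiation you include is a reasonable addition that the paper leaves implicit.
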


By Fourier transform, we have $\theta(\frac{1}{\alpha};z)=\alpha \theta(\alpha;z)$ (\cite{Luo2022,Mon1988}). Therefore, the proof of Theorem \ref{Th1} can be reduced to the case when $\alpha\geq1.$ Then, for $\alpha\geq1,$ we provide a lower bound for $\frac{\partial^2}{\partial x\partial y}\theta(\alpha;z)$.

\begin{lemma} \label{3.1lem3} For $\alpha\geq1$ and $z\in \{(x,y)|0<x<\frac{1}{2},y\geq\frac{3}{5}\}$, it holds that
\begin{equation}\aligned\nonumber
\frac{\partial^2}{\partial x \partial y}\theta(\alpha;z)\geq \frac{\pi}{10} \alpha^{\frac{1}{2}}y^{\frac{1}{2}}\big( -\vartheta_{Y}(\frac{y}{\alpha};x) \big)e^{-\pi\alpha y}>0.
\endaligned\end{equation}
\end{lemma}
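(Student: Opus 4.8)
The plan is to expand $\frac{\partial^2}{\partial x\partial y}\theta(\alpha;z)$ using Lemma \ref{3.1lem2}, pull out the positive factor $\alpha^{1/2}y^{1/2}e^{-\pi\alpha y}$, and estimate the resulting series from below by $\tfrac{\pi}{10}\bigl(-\vartheta_Y(\tfrac y\alpha;x)\bigr)$. Write $X:=\tfrac y\alpha$. For $0<x<\tfrac12$ one has $\vartheta_Y(X;x)<0$ (the strict form of Lemma \ref{2lem3}(1) with $k=0$, via \eqref{lem2.3eq1}), so set $g:=-\vartheta_Y(X;x)>0$. Lemma \ref{3.1lem2} yields
\begin{equation*}
\frac{\partial^2}{\partial x\partial y}\theta(\alpha;z)=\alpha^{1/2}y^{1/2}e^{-\pi\alpha y}\,\Sigma,\qquad
\Sigma:=\sum_{n\geq1}\Bigl[\bigl(\tfrac{n}{\alpha y}-2\pi n^3\bigr)\vartheta_Y(X;nx)+2\alpha^{-2}n\,\vartheta_{XY}(X;nx)\Bigr]e^{-\pi\alpha y(n^2-1)},
\end{equation*}
so it is enough to prove $\Sigma\geq\tfrac{\pi}{10}g$. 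Splitting off the $n=1$ term, $\Sigma=g(A_1+R)$ with $A_1=\bigl(2\pi-\tfrac1{\alpha y}\bigr)-2\alpha^{-2}\,\vartheta_{XY}(X;x)/\vartheta_Y(X;x)$ and $gR$ the sum over $n\geq2$; I bound $A_1$ below and $|R|$ above.

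For $A_1$: if $X\geq\tfrac13$, Lemma \ref{2lem1}(3) gives $\vartheta_{XY}(X;x)/\vartheta_Y(X;x)\leq-\pi\frac{1+\hat\nu(X)}{1+\hat\mu(X)}$, where $1+\hat\mu(X)>0$ and $1+\hat\nu(X)>0$ because $|\hat\mu(X)|\leq\mu(X)\leq\mu(\tfrac13)<1$ and $|\hat\nu(X)|\leq\nu(X)\leq\nu(\tfrac13)<1$; hence the quotient is $<0$, so $-2\alpha^{-2}\vartheta_{XY}/\vartheta_Y>0$ and, since $\alpha y\geq\tfrac35$, $A_1\geq 2\pi-\tfrac53$. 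If $0<X<\tfrac13$ I use instead Lemma \ref{2lem1}(4), $\vartheta_{XY}(X;x)/\vartheta_Y(X;x)\leq\tfrac{\pi}{4X^2}$; together with $\alpha^2X^2=y^2$, $\tfrac1{\alpha y}=\tfrac X{y^2}\leq\tfrac1{2y^2}$ (as $X<\tfrac12$), and $y\geq\tfrac35$,
\[
A_1\ \geq\ 2\pi-\frac1{\alpha y}-\frac{\pi}{2\alpha^2X^2}\ =\ 2\pi-\frac1{\alpha y}-\frac{\pi}{2y^2}\ \geq\ 2\pi-\frac{1+\pi}{2y^2}\ \geq\ 2\pi-\frac{25(1+\pi)}{18}\ >\ \frac12 .
\]

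For the tail: by the triangle inequality and $\bigl|\tfrac n{\alpha y}-2\pi n^3\bigr|<2\pi n^3$ (valid since $\tfrac1{\alpha y}\leq\tfrac53<2\pi$),
\[
|R|\ \leq\ \sum_{n\geq2}\Bigl[2\pi n^3\Bigl|\tfrac{\vartheta_Y(X;nx)}{\vartheta_Y(X;x)}\Bigr|+2\alpha^{-2}n\Bigl|\tfrac{\vartheta_{XY}(X;nx)}{\vartheta_Y(X;x)}\Bigr|\Bigr]e^{-\pi\alpha y(n^2-1)} ,
\]
and I insert the quotient bounds from Lemma \ref{2lem1}: parts (1) and (5) when $X>\tfrac15$ (so $\mu(X)\leq\mu(\tfrac15)<1$), parts (2) and (6) when $0<X\leq\tfrac15$. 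The first case gives $|R|\leq\frac{2\pi}{1-\mu(X)}\sum_{n\geq2}[\,n^4(1+\mu(X))+\alpha^{-2}n^2(1+\nu(X))\,]e^{-\pi\alpha y(n^2-1)}$; the second, after using $\alpha^{-2}X^{-1}=\tfrac X{y^2}$ and $\alpha^{-2}X^{-2}=\tfrac1{y^2}$, gives $|R|\leq e^{\pi/(4X)}\sum_{n\geq2}(2n^4+cn^2)e^{-\pi\alpha y(n^2-1)}$ with $c$ an absolute constant. The decisive observation is $\alpha y=\tfrac{y^2}{X}\geq\tfrac9{25X}$ (using $y\geq\tfrac35$): when $X$ is small — precisely when $\mu(X),\nu(X)$ (or $e^{\pi/(4X)}$) are large — the decay rate $\pi\alpha y(n^2-1)\geq\tfrac{9\pi(n^2-1)}{25X}$ grows in exact compensation, so the tail becomes negligible; for $\tfrac15<X<\tfrac13$ one already has $\alpha y\geq\tfrac{27}{25}$, and for $X\geq\tfrac13$ one uses $\mu(X)\leq\mu(\tfrac13)$, $\alpha y\geq\tfrac35$, $\alpha^{-2}\leq1$. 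A routine estimate then yields $|R|<10^{-4}$ for $X\leq\tfrac15$, $|R|<\tfrac1{30}$ for $\tfrac15<X<\tfrac13$, and $|R|<\tfrac7{10}$ for $X\geq\tfrac13$.

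Putting these together, $\Sigma/g=A_1+R\geq\tfrac{\pi}{10}$ in all three ranges of $X$ (the margins being $2\pi-\tfrac53-\tfrac7{10}$, $\tfrac12-\tfrac1{30}$ and $\tfrac12-10^{-4}$, each $>\tfrac{\pi}{10}$), which is exactly $\frac{\partial^2}{\partial x\partial y}\theta(\alpha;z)\geq\tfrac{\pi}{10}\alpha^{1/2}y^{1/2}\bigl(-\vartheta_Y(\tfrac y\alpha;x)\bigr)e^{-\pi\alpha y}$, and this is $>0$ since $g>0$. The main obstacle is the choice of the cutoff $X=\tfrac13$: above it the strong bound $A_1\geq 2\pi-\tfrac53$ follows from the sign of $\vartheta_{XY}/\vartheta_Y$, but below it only $A_1>\tfrac12$ is available, so one must show the tail is genuinely small there, and this works only because $\alpha y=y^2/X$ (not merely $\alpha y\geq\tfrac35$) governs the Gaussian decay once $X$ drops below $\tfrac13$.
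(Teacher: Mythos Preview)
Your proof is correct and follows essentially the same strategy as the paper: expand $\theta_{xy}$ via Lemma~\ref{3.1lem2}, factor out $\alpha^{1/2}y^{1/2}e^{-\pi\alpha y}(-\vartheta_Y(y/\alpha;x))$, and control the remaining quotients using Lemma~\ref{2lem1}. The only difference is cosmetic: the paper splits into two ranges of $X=y/\alpha$ (at $X=\tfrac12$) while you use three (at $X=\tfrac13$ and $X=\tfrac15$), and you make the coupling $\alpha y=y^2/X$ a bit more explicit in the small-$X$ regime; otherwise the arguments coincide.
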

\begin{proof}
By Lemma \ref{3.1lem2}, $\theta_{xy}(\alpha;z)$ can be rewritten as follows:
\begin{equation}\aligned\label{3.1lem3eq1}
\frac{\partial^2}{\partial x\partial y}\theta(\alpha;z)=&2\pi {\alpha}^{\frac{1}{2}}y^{\frac{1}{2}}e^{-\pi\alpha y}\big(-\vartheta_{Y}(\frac{y}{\alpha};x)  \big)
\big(1-\frac{1}{2\pi\alpha y}-\frac{1}{\pi {\alpha}^2}\frac{\vartheta_{XY}(\frac{y}{\alpha};x)}{\vartheta_{Y}(\frac{y}{\alpha};x)}\\
&+\sum_{n=2}^{\infty}n^3 e^{-\pi\alpha y(n^2-1)}\frac{\vartheta_{Y}(\frac{y}{\alpha};nx)}{\vartheta_{Y}(\frac{y}{\alpha};x)}-\frac{1}{2\pi\alpha y}\sum_{n=2}^{\infty}n e^{-\pi\alpha y(n^2-1)}\frac{\vartheta_{Y}(\frac{y}{\alpha};nx)}{\vartheta_{Y}(\frac{y}{\alpha};x)}\\
&-\frac{1}{\pi{\alpha}^2}\sum_{n=2}^{\infty}n e^{-\pi\alpha y(n^2-1)}\frac{\vartheta_{XY}(\frac{y}{\alpha};nx)}{\vartheta_{Y}(\frac{y}{\alpha};x)}
\big).
\endaligned\end{equation}
By \eqref{lem2.3eq2}, we have $\big(-\vartheta_{Y}(\frac{y}{\alpha};x)>0$ for $0<x<\frac{1}{2}.$ Next, we will divide the proof into two cases: $\frac{y}{\alpha}\geq\frac{1}{2}$ and $\frac{y}{\alpha}\leq\frac{1}{2}.$\\
Case A: $\frac{y}{\alpha}\geq\frac{1}{2}.$ By \eqref{3.1lem3eq1} and Lemma \ref{2lem1}, we have
\begin{equation}\aligned\label{3.1lem3eq2}
\frac{\partial^2}{\partial x \partial y}\theta(\alpha;z)\geq&2\pi {\alpha}^{\frac{1}{2}}y^{\frac{1}{2}}e^{-\pi\alpha y}\big(-\vartheta_{Y}(\frac{y}{\alpha};x)  \big)
\big(1-\frac{1}{2\pi\alpha y}+\frac{1}{ {\alpha}^2}\frac{1+\hat{\nu}(\frac{y}{\alpha})}{1+\hat{\mu}(\frac{y}{\alpha})}\\
&-\frac{1+\mu(\frac{y}{\alpha})}{1-\mu(\frac{y}{\alpha})}\cdot\nu(\alpha y)
-\frac{1}{2\pi\alpha y} \frac{1+\mu(\frac{y}{\alpha})}{1-\mu(\frac{y}{\alpha})}\cdot \mu(\alpha y)-\frac{1}{{\alpha}^2}\frac{1+\nu(\frac{y}{\alpha})}{1-\mu(\frac{y}{\alpha})}\cdot \mu(\alpha y)\big).
\endaligned\end{equation}
Note that $\mu(x),\nu(x),\frac{1+\mu(x)}{1-\mu(x)}$ and $\frac{1+\nu(x)}{1-\mu(x)}$ are decreasing functions , while $\frac{1+\hat{\nu}(x)}{1+\hat{\mu}(x)}$ is increasing function as $x\geq\frac{1}{2}$. Therefore, for $\frac{y}{\alpha}\geq\frac{1}{2}$ and $\alpha y\geq\frac{3}{5},$ by \eqref{3.1lem3eq2}, one gets
\begin{equation}\aligned\label{3.1lem3eq3}
\frac{\partial^2}{\partial x\partial y}\theta(\alpha;z)\geq&2\pi {\alpha}^{\frac{1}{2}}y^{\frac{1}{2}}e^{-\pi\alpha y}\big(-\vartheta_{Y}(\frac{y}{\alpha};x)  \big)
\Big(1+\frac{1}{ {\alpha}^2}\big(\frac{1+\hat{\nu}(\frac{1}{2})}{1+\hat{\mu}(\frac{1}{2})}-\frac{1+\nu(\frac{1}{2})}{1-\mu(\frac{1}{2})}\cdot \mu(\frac{3}{5})\big)\\
&-\frac{1}{2\pi\alpha y}(1+\frac{1+\mu(\frac{1}{2})}{1-\mu(\frac{1}{2})}\cdot \mu(\frac{3}{5}))-\frac{1+\mu(\frac{1}{2})}{1-\mu(\frac{1}{2})}\cdot\nu(\frac{3}{5})\Big).
\endaligned\end{equation}
In \eqref{3.1lem3eq3}, by computation, one has $\frac{1+\hat{\nu}(\frac{1}{2})}{1+\hat{\mu}(\frac{1}{2})}-\frac{1+\nu(\frac{1}{2})}{1-\mu(\frac{1}{2})}\cdot \mu(\frac{3}{5})=0.8729\cdots$ and
\begin{equation}\aligned\label{3.1lem3eq3add}
\frac{1+\mu(\frac{1}{2})}{1-\mu(\frac{1}{2})}\cdot \mu(\frac{3}{5})=0.0150\cdots,\;\;\frac{1+\mu(\frac{1}{2})}{1-\mu(\frac{1}{2})}\cdot\nu(\frac{3}{5})=0.0602\cdots.
\endaligned\end{equation}
Given that \eqref{3.1lem3eq3} and \eqref{3.1lem3eq3add}, one has
\begin{equation}\aligned\label{3.1lem3eq4}
\frac{\partial^2}{\partial x\partial y}\theta(\alpha;z)
\geq& 2\pi {\alpha}^{\frac{1}{2}}y^{\frac{1}{2}}e^{-\pi\alpha y}\big(-\vartheta_{Y}(\frac{y}{\alpha};x)  \big)
\Big(1-\frac{1}{2\pi\alpha y}\big(1+0.0150\big)
-0.0602\Big)\\
\geq&\frac{4\pi}{3} {\alpha}^{\frac{1}{2}}y^{\frac{1}{2}}e^{-\pi\alpha y}\big(-\vartheta_{Y}(\frac{y}{\alpha};x)  \big)>0.
\endaligned\end{equation}
Case B: $\frac{y}{\alpha}\leq \frac{1}{2}.$ Since $y\geq \frac{3}{5}$, then $\alpha\geq 2y\geq\frac{6}{5}$, and
\begin{equation}\aligned\label{3.1lem3eq5}
\frac{1}{\pi}\sum_{n=2}^{\infty}n^4 e^{-\pi \alpha (y(n^2-1)-\frac{1}{4y})}
+(\frac{2}{\pi^2 \alpha y}+\frac{1}{4\pi y^2})\sum_{n=2}^{\infty}n^2 e^{-\pi \alpha (y(n^2-1)-\frac{1}{4y})}
\leq0.039.
\endaligned\end{equation}
By \eqref{3.1lem3eq1} and Lemma \ref{2lem1}, we have
\begin{equation}\aligned\label{3.1lem3eq4}
\frac{\partial^2}{\partial x \partial y}\theta(\alpha;z)\geq &2\pi {\alpha}^{\frac{1}{2}}y^{\frac{1}{2}}e^{-\pi\alpha y}\big(-\vartheta_{Y}(\frac{y}{\alpha};x)  \big)
\big(1-\frac{1}{2\pi\alpha y}-\frac{1}{4y^2}-\frac{1}{\pi}\sum_{n=2}^{\infty}n^4 e^{-\pi \alpha (y(n^2-1)-\frac{1}{4y})}\\
&-(\frac{2}{\pi^2 \alpha y}+\frac{1}{4\pi y^2})\sum_{n=2}^{\infty}n^2 e^{-\pi \alpha (y(n^2-1)-\frac{1}{4y})}
\big).\\
\endaligned\end{equation}
For $y\geq \frac{3}{5}$ and $\alpha\geq\frac{6}{5}$, using \eqref{3.1lem3eq5} and \eqref{3.1lem3eq4}, one has
\begin{equation}\aligned\label{3.1lem3eq6}
\frac{\partial^2}{\partial x \partial y}\theta(\alpha;z)\geq &2\pi {\alpha}^{\frac{1}{2}}y^{\frac{1}{2}}e^{-\pi\alpha y}\big(-\vartheta_{Y}(\frac{y}{\alpha};x)  \big)
\big(1-\frac{1}{2\pi\alpha y}-\frac{1}{4y^2}-0.039\big)\\
\geq &\frac{\pi}{100} \alpha^{\frac{1}{2}}y^{\frac{1}{2}}\big( -\vartheta_{Y}(\frac{y}{\alpha};x) \big)e^{-\pi\alpha y}>0.
\endaligned\end{equation}
Consequently, \eqref{3.1lem3eq3} and \eqref{3.1lem3eq6} jointly lead to the desired result.
\end{proof}

Next, we derive a lower bound for $-\frac{\partial^3}{\partial x \partial y^2}\theta(\alpha;z).$

\begin{lemma} \label{3.2lem2} Assume that $\alpha\geq1.$ Then for $z\in D_{\mathcal{G}}$, it holds that
\begin{equation}\aligned\nonumber
-\frac{\partial^3}{\partial x \partial y^2}\theta(\alpha;z)\geq  \frac{\pi^2}{50}{\alpha}^{\frac{3}{2}}y^{\frac{1}{2}}\big( -\vartheta_{Y}(\frac{y}{\alpha};x)\big)e^{-\pi\alpha y}>0.
\endaligned\end{equation}
\end{lemma}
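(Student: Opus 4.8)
The plan is to run the argument of Lemma~\ref{3.1lem3} one derivative higher, starting now from the expansion of $\frac{\partial^3}{\partial x \partial y^2}\theta(\alpha;z)$ in Lemma~\ref{3.2lem1}. By the Fourier identity $\theta(\tfrac1\alpha;z)=\alpha\,\theta(\alpha;z)$ it suffices to treat $\alpha\ge1$, and $z\in D_{\mathcal{G}}$ gives $0<x<\tfrac12$ together with $y>\tfrac{\sqrt3}{2}$ (from $|z|>1$). First I would pull out of the six sums in Lemma~\ref{3.2lem1} the common positive factor $2\pi^2\alpha^{3/2}y^{1/2}e^{-\pi\alpha y}\bigl(-\vartheta_{Y}(\tfrac{y}{\alpha};x)\bigr)$, which is positive by \eqref{lem2.3eq2} since $0<x<\tfrac12$, so that
\begin{equation}\aligned\nonumber
-\frac{\partial^3}{\partial x \partial y^2}\theta(\alpha;z)
=2\pi^2\alpha^{3/2}y^{1/2}e^{-\pi\alpha y}\bigl(-\vartheta_{Y}(\tfrac{y}{\alpha};x)\bigr)\,\bigl(1+\mathcal{R}\bigr),
\endaligned\end{equation}
where, after collecting the $n=1$ terms,
\begin{equation}\aligned\nonumber
1+\mathcal{R}
&=1-\frac{1}{\pi\alpha y}-\frac{1}{4\pi^2\alpha^2y^2}
+\Bigl(\frac{1}{\pi^2\alpha^3y}-\frac{2}{\pi\alpha^2}\Bigr)\frac{\vartheta_{XY}(\tfrac{y}{\alpha};x)}{\vartheta_{Y}(\tfrac{y}{\alpha};x)}\\
&\qquad+\frac{1}{\pi^2\alpha^4}\frac{\vartheta_{XXY}(\tfrac{y}{\alpha};x)}{\vartheta_{Y}(\tfrac{y}{\alpha};x)}+\mathcal{T},
\endaligned\end{equation}
and $\mathcal{T}$ collects the $n\ge2$ tails, each carrying the decay $e^{-\pi\alpha y(n^2-1)}$ and a quotient of type $\frac{\vartheta_{Y}(y/\alpha;nx)}{\vartheta_{Y}(y/\alpha;x)}$, $\frac{\vartheta_{XY}(y/\alpha;nx)}{\vartheta_{Y}(y/\alpha;x)}$ or $\frac{\vartheta_{XXY}(y/\alpha;nx)}{\vartheta_{Y}(y/\alpha;x)}$. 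Since $2\pi^2\cdot\tfrac1{100}=\tfrac{\pi^2}{50}$, the lemma is equivalent to the estimate $1+\mathcal{R}\ge\tfrac1{100}$.

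As in Lemma~\ref{3.1lem3} I would split into \textbf{Case~A}: $\tfrac{y}{\alpha}\ge\tfrac12$ and \textbf{Case~B}: $\tfrac{y}{\alpha}\le\tfrac12$. In Case~A, $X:=\tfrac{y}{\alpha}\ge\tfrac12>\tfrac{59}{250}$, so every two-sided bound of Lemma~\ref{2lem1} is available: part~(3) makes $\frac{\vartheta_{XY}(X;x)}{\vartheta_{Y}(X;x)}<0$, and since the coefficient $\frac{1}{\pi^2\alpha^3y}-\frac{2}{\pi\alpha^2}$ is also negative (as $\tfrac1{\pi\alpha y}<2$) that term is $\ge0$; part~(7) makes $\frac{\vartheta_{XXY}(X;x)}{\vartheta_{Y}(X;x)}>0$, so the $\vartheta_{XXY}$-term is $\ge0$ as well. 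Hence only $-\tfrac1{\pi\alpha y},\,-\tfrac1{4\pi^2\alpha^2y^2}$ (controlled by $\alpha y\ge\tfrac{\sqrt3}{2}$) and the tails can pull $1+\mathcal{R}$ down, and $\mathcal{T}$ is bounded by Lemma~\ref{2lem1}(1),(5),(7) together with the monotonicities of $\mu,\nu,\omega,\frac{1+\mu}{1-\mu},\frac{1+\nu}{1-\mu}$ (decreasing) and $\frac{1+\hat\nu}{1+\hat\mu},\frac{1+\hat\omega}{1+\hat\mu}$ (increasing on $X\ge\tfrac12$) and the decay $e^{-\pi\alpha y(n^2-1)}$; this gives $1+\mathcal{R}\ge c_A$ with $c_A$ far above $\tfrac1{100}$, essentially the computation in the proof of Lemma~\ref{3.1lem3} carried up one order.

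Case~B ($\alpha\ge2y\ge\sqrt3$, $X=\tfrac{y}{\alpha}\le\tfrac12$) is the delicate one, and I would subdivide it at $X=\tfrac15$. For $0<X\le\tfrac15$ (so $\alpha\ge5y$ is large) I would use the polynomial bounds Lemma~\ref{2lem1}(2),(4),(6) and Lemma~\ref{2lem2}(1),(2); the point is the identity $\alpha X=y$, which turns $\bigl(\tfrac{1}{\pi^2\alpha^3y}-\tfrac{2}{\pi\alpha^2}\bigr)\tfrac{\pi}{4X^2}$ into $\tfrac{1}{4\pi\alpha y^3}-\tfrac{1}{2y^2}\ge-\tfrac23$ and $\tfrac{1}{\pi^2\alpha^4}\bigl(\tfrac{15}{4X^2}-\tfrac{\pi}{4X^4}\bigr)$ into $\tfrac{15}{4\pi^2\alpha^2y^2}-\tfrac{1}{4\pi y^4}\ge-\tfrac{4}{9\pi}$, while the remaining two $n=1$ terms and the tails are small — the latter, multiplied by the $e^{\pi/(4X)}=e^{\pi\alpha/(4y)}$ produced by Lemma~\ref{2lem1}(2),(6) and Lemma~\ref{2lem2}(2), recombine into $e^{-\pi\alpha(y(n^2-1)-\frac1{4y})}$, exponentially small since $y(n^2-1)-\tfrac1{4y}\ge3y-\tfrac1{4y}>0$ and $\alpha$ is large (compare~\eqref{3.1lem3eq5}); altogether $1+\mathcal{R}\ge1-\tfrac23-\tfrac{4}{9\pi}-\varepsilon>\tfrac1{100}$ for a small explicit $\varepsilon$. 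For $\tfrac15\le X\le\tfrac12$ (so $2\le\alpha/y\le5$) I would instead use the sharp two-sided bounds Lemma~\ref{2lem1}(3),(5),(7) — with Lemma~\ref{2lem2}(1) taking the place of~(7) on the narrow strip $\tfrac15\le X<\tfrac{59}{250}$ where (7) is not yet valid — which, exactly as in Case~A, keep the $\vartheta_{XY}$- and $\vartheta_{XXY}$-contributions of the favorable sign and restore a comfortable margin.

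The step I expect to be the main obstacle is precisely this Case~B bookkeeping for $X$ near $\tfrac12$, i.e.\ near the corner $z=e^{i\pi/3}$: there $\vartheta_{Y}(\tfrac{y}{\alpha};x)\to0$, the polynomial bound $\frac{\vartheta_{XY}}{\vartheta_{Y}}\le\frac{\pi}{4X^2}$ is essentially sharp and genuinely costs $-\tfrac23$, so one really must switch to the sharp bounds (3),(7) there, and on the intervals where $\frac{1+\hat\nu}{1+\hat\mu}$ and $\frac{1+\hat\omega}{1+\hat\mu}$ change sign one lacks clean monotonicity and has to argue by a finite subdivision together with a handful of explicit numerical inequalities, analogous to \eqref{3.1lem3eq3add} and \eqref{3.1lem3eq5}. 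Everywhere else there is ample room.
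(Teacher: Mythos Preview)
Your factorization, the split at $y/\alpha=\tfrac12$, and your Case~A treatment essentially reproduce the paper's proof. The difference is in Case~B ($X:=y/\alpha\le\tfrac12$): the paper does \emph{not} subdivide at $X=\tfrac15$. It applies the polynomial bounds of Lemma~\ref{2lem1}(4) and Lemma~\ref{2lem2}(1) uniformly on the whole range $0<X\le\tfrac12$, but with one twist you are missing: it treats the two pieces of the $\vartheta_{XY}$-coefficient \emph{separately}, using the lower bound in Lemma~\ref{2lem1}(4) for the positive piece $\tfrac{1}{\pi^2\alpha^3y}$ (then estimated via~\eqref{3.2lem2eq9}) and the upper bound $\tfrac{\pi}{4X^2}$ only for the negative piece $-\tfrac{2}{\pi\alpha^2}$. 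Combined with the lower bound $\tfrac{15}{4X^2}-\tfrac{\pi}{4X^4}$ for $\vartheta_{XXY}/\vartheta_Y$, this produces a \emph{positive} term $+\tfrac{7}{2\pi^2\alpha^2y^2}$ (from $-\tfrac{1}{4\pi^2\alpha^2y^2}+\tfrac{15}{4\pi^2\alpha^2y^2}$) in the bracket, and the paper keeps it. That single positive term is exactly what rescues the inequality at the worst point $y=\tfrac{\sqrt3}{2}$, $\alpha=\sqrt3$: without it, $1-\tfrac{1}{\pi\alpha y}-\tfrac{1}{2y^2}-\tfrac{1}{4\pi y^4}$ is already slightly negative there.

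Your plan discards this term (you bound $\tfrac{15}{4\pi^2\alpha^2y^2}-\tfrac{1}{4\pi y^4}\ge-\tfrac{4}{9\pi}$), which is precisely why you are then forced into the subdivision and into the ``obstacle'' you flag: on $\tfrac15\le X<\tfrac12$ the sharp bounds (3) and (7) of Lemma~\ref{2lem1} do not give the favorable signs you claim (for instance $1+\hat\nu(X)$ and $1+\hat\omega(X)$ are negative for $X$ near $\tfrac15$, so neither $\vartheta_{XY}/\vartheta_Y<0$ nor $\vartheta_{XXY}/\vartheta_Y>0$ follows), and you would indeed have to patch by hand. The paper's device of retaining $+\tfrac{7}{2\pi^2\alpha^2y^2}$ makes all of Case~B go through in one stroke, and the obstacle you anticipate never arises.
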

\begin{proof}
By Lemma \ref{3.2lem1}, we can rewrite the expression of $\theta_{xyy}(\alpha;z)$ as follows:
\begin{equation}\aligned\label{3.2lem2eq1}
&-\frac{\partial^3}{\partial x \partial y^2}\theta(\alpha;z)=2\pi^2{\alpha}^{\frac{3}{2}}y^{\frac{1}{2}}e^{-\pi\alpha y}\big( -\vartheta_{Y}(\frac{y}{\alpha};x)\big)
\Big(  1-\frac{1}{4\pi^2\alpha^2 y^2}-\frac{1}{\pi\alpha y}+\frac{1}{\pi^2\alpha^3y}\frac{\vartheta_{XY}(\frac{y}{\alpha};x)}{\vartheta_{Y}(\frac{y}{\alpha};x)}\\
&\;\;\;\;\;\;\;\;\;\;\;\;\;\;\;\;\;-\frac{2}{\pi \alpha^2}\frac{\vartheta_{XY}(\frac{y}{\alpha};x)}{\vartheta_{Y}(\frac{y}{\alpha};x)}+\frac{1}{\pi^2\alpha^4}\frac{\vartheta_{XXY}(\frac{y}{\alpha};x)}{\vartheta_{Y}(\frac{y}{\alpha};x)}
+\sum_{n=2}^{\infty} \big(( \frac{n}{\pi^2\alpha^3 y}-\frac{2n^3}{\pi \alpha^2} )\frac{\vartheta_{XY}(\frac{y}{\alpha};nx)}{\vartheta_{Y}(\frac{y}{\alpha};x)}
\\
&\;\;\;\;\;\;\;\;\;\;\;\;\;\;\;\;\;+(n^5-\frac{n}{4\pi^2\alpha^2y^2}-\frac{n^3}{\pi\alpha y})\frac{\vartheta_{Y}(\frac{y}{\alpha};nx)}{\vartheta_{Y}(\frac{y}{\alpha};x)}+ \frac{n}{\pi^2\alpha^4}\frac{\vartheta_{XXY}(\frac{y}{\alpha};nx)}{\vartheta_{Y}(\frac{y}{\alpha};x)}\big)e^{-\pi\alpha y(n^2-1)}\Big).\\
\endaligned\end{equation}
Similar to the proof of Lemma \ref{3.1lem3}, we divide the proof into two cases as follows:\\
Case A: $\frac{y}{\alpha}\geq\frac{1}{2}.$ For simplicity, we denote that
\begin{equation}\aligned\label{3.2lem2sigma}
\epsilon_{1}:=&\frac{1+\mu(\frac{y}{\alpha})}{1-\mu(\frac{y}{\alpha})}\big(  \omega(\alpha y)+\frac{\mu(\alpha y)}{4\pi^2\alpha^2y^2}+\frac{\nu(\alpha y)}{\pi\alpha y}\big)
+\frac{1+\nu(\frac{y}{\alpha})}{1-\mu(\frac{y}{\alpha})}\big( \frac{\mu(\alpha y)}{\pi \alpha^3 y}+\frac{2\nu(\alpha y)}{\alpha^2}\big)
+\frac{\mu(\alpha y)}{\alpha^4}\frac{1+\omega(\frac{y}{\alpha})}{1-\mu(\frac{y}{\alpha})}.\\
\endaligned\end{equation}
By \eqref{3.2lem2eq1} and Lemmas \ref{2lem1} and \ref{2lem2}, we have
\begin{equation}\aligned\label{3.2lem2eq2}
-\frac{\partial^3}{\partial x \partial y^2}\theta(\alpha;z)\geq&2\pi^2{\alpha}^{\frac{3}{2}}y^{\frac{1}{2}}e^{-\pi\alpha y}\big( -\vartheta_{Y}(\frac{y}{\alpha};x)\big)
\Big(1+(2\cdot\frac{1+\hat{\nu}(\frac{y}{\alpha})}{1+\hat{\mu}(\frac{y}{\alpha})}
-\frac{1}{4\pi^2y^2}-\frac{1}{\pi\alpha y}\frac{1+\nu(\frac{y}{\alpha})}{1+\mu(\frac{y}{\alpha})})\frac{1}{\alpha^2}\\
&+\frac{1}{\alpha^4}\frac{1+\hat{\omega}(\frac{y}{\alpha})}{1+\hat{\mu}(\frac{y}{\alpha})}-\frac{1}{\pi\alpha y}-\epsilon_{1}\Big).
\endaligned\end{equation}
Notice that $\mu(x),\nu(x),\omega(x),\frac{1+\nu(x)}{1+\mu(x)},\frac{1+\mu(x)}{1-\mu(x)},\frac{1+\nu(x)}{1-\mu(x)}$ and $\frac{1+\omega(x)}{1-\mu(x)}$ are decreasing functions, while $\frac{1+\hat{\nu}(x)}{1+\hat{\mu}(x)}$ and $\frac{1+\hat{\omega}(x)}{1+\hat{\mu}(x)}$ are increasing function as $x\geq\frac{1}{2}$. Hence $\frac{1+\nu(x)}{1+\mu(x)}\leq \frac{1+\nu(\frac{1}{2})}{1+\mu(\frac{1}{2})}=1.1042\cdots,$ and
\begin{equation}\aligned\label{3.2lem2eq3}
\frac{1+\hat{\nu}(x)}{1+\hat{\mu}(x)}\geq \frac{1+\hat{\nu}(\frac{1}{2})}{1+\hat{\mu}(\frac{1}{2})}=0.8884\cdots,
\frac{1+\hat{\omega}(x)}{1+\hat{\mu}(x)}\geq \frac{1+\hat{\omega}(\frac{1}{2})}{1+\hat{\mu}(\frac{1}{2})}=0.4435\cdots.
\endaligned\end{equation}
As $\frac{y}{\alpha}\geq\frac{1}{2}$, $\alpha\geq1$ and $y\geq\frac{\sqrt{3}}{2}$, by \eqref{3.2lem2eq3}, we have
\begin{equation}\aligned\label{3.2lem2eq4}
2\cdot\frac{1+\hat{\nu}(\frac{y}{\alpha})}{1+\hat{\mu}(\frac{y}{\alpha})}
-\frac{1}{4\pi^2y^2}-\frac{1}{\pi\alpha y}\frac{1+\nu(\frac{y}{\alpha})}{1+\mu(\frac{y}{\alpha})}
\geq 2\cdot\frac{1+\hat{\nu}(\frac{1}{2})}{1+\hat{\mu}(\frac{1}{2})}
-\frac{1}{4\pi^2y^2}-\frac{1}{\pi\alpha y}\frac{1+\nu(\frac{1}{2})}{1+\mu(\frac{1}{2})}>0,
\endaligned\end{equation}
and
\begin{equation}\aligned\label{3.2lem2eq5}
\epsilon_{1}\leq &\frac{1+\mu(\frac{1}{2})}{1-\mu(\frac{1}{2})}\big(  \omega(\frac{\sqrt{3}}{2})+\frac{\mu(\frac{\sqrt{3}}{2})}{4\pi^2\alpha^2y^2}+\frac{\nu(\frac{\sqrt{3}}{2})}{\pi\alpha y}\big)
+\frac{1+\nu(\frac{1}{2})}{1-\mu(\frac{1}{2})}\big( \frac{\mu(\frac{\sqrt{3}}{2})}{\pi \alpha^3 y}+\frac{2\nu(\frac{\sqrt{3}}{2})}{\alpha^2}\big)
+\frac{\mu(\frac{\sqrt{3}}{2})}{\alpha^4}\frac{1+\omega(\frac{1}{2})}{1-\mu(\frac{1}{2})}\\
\leq &\frac{2}{50}.
\endaligned\end{equation}
By using \eqref{3.2lem2eq2}-\eqref{3.2lem2eq5}, we obtain that
\begin{equation}\aligned\nonumber
-\frac{\partial^3}{\partial x \partial y^2}\theta(\alpha;z)\geq&2\pi^2{\alpha}^{\frac{3}{2}}y^{\frac{1}{2}}e^{-\pi\alpha y}\big( -\vartheta_{Y}(\frac{y}{\alpha};x)\big)
(1-\frac{1}{\pi\alpha y}-\epsilon_{1})\\
\geq&\pi^2{\alpha}^{\frac{3}{2}}y^{\frac{1}{2}}e^{-\pi\alpha y}\big( -\vartheta_{Y}(\frac{y}{\alpha};x)\big)>0.
\endaligned\end{equation}
Case B: $\frac{y}{\alpha}\leq\frac{1}{2}.$ In this case, $y\geq\frac{\sqrt{3}}{2},\alpha\geq2y\geq\sqrt{3}$. We denote that
\begin{equation}\aligned\label{3.2lem2sigma}
\epsilon_{2}:=\epsilon_{2}(\alpha;y)=&\frac{1}{\pi}\sum_{n=2}^{\infty}n^6e^{-\pi\alpha\big(y(n^2-1)-\frac{1}{4y} \big)}+\big(\frac{4}{\pi^2\alpha y}+\frac{1}{2\pi y^2} \big)\sum_{n=2}^{\infty}n^4e^{-\pi\alpha\big(y(n^2-1)-\frac{1}{4y}\big)}\\
&+\big(  \frac{11}{2\pi^3\alpha^2y^2}+\frac{1}{4\pi^2\alpha y^3}+\frac{1}{4\pi^2 y^4}\big)\sum_{n=2}^{\infty}n^2e^{-\pi\alpha\big(y(n^2-1)-\frac{1}{4y} \big)}.
\endaligned\end{equation}
By \eqref{3.2lem2eq1} and Lemmas \ref{2lem1} and \ref{2lem2}, we have
\begin{equation}\aligned\label{3.2lem2eq8}
-\frac{\partial^3}{\partial x \partial y^2}\theta(\alpha;z)\geq&2\pi^2{\alpha}^{\frac{3}{2}}y^{\frac{1}{2}}e^{-\pi\alpha y}\big( -\vartheta_{Y}(\frac{y}{\alpha};x)\big)
\big(  1+\frac{7}{2\pi^2\alpha^2 y^2}-\frac{1}{\pi\alpha y}\\
&-\frac{1}{\pi^2\alpha^3y}\frac{\frac{3}{2}(\frac{y}{\alpha})^2+4\pi^2 e^{-\frac{\pi\alpha}{y}}}{(\frac{y}{\alpha})^3-4\pi(\frac{y}{\alpha})^2e^{-\frac{\pi\alpha}{y}}}
-\frac{1}{2y^2}-\frac{1}{4\pi y^4}-\epsilon_{2}\big).\\
\endaligned\end{equation}
Since $\frac{\alpha}{y}\geq2$, it follows that
\begin{equation}\aligned\label{3.2lem2eq9}
\frac{\frac{3}{2}(\frac{y}{\alpha})^2+4\pi^2 e^{-\frac{\pi\alpha}{y}}}{(\frac{y}{\alpha})^3-4\pi(\frac{y}{\alpha})^2e^{-\frac{\pi\alpha}{y}}}
=\frac{\frac{3}{2}\frac{\alpha}{y}+4\pi(\frac{\alpha}{y})^3e^{-\frac{\pi\alpha}{y}}}{1-4\pi\frac{\alpha}{y}e^{-\pi\frac{\alpha}{y}}}
=\frac{\frac{3}{2}+4\pi(\frac{\alpha}{y})^2e^{-\frac{\pi\alpha}{y}}}{1-4\pi\frac{\alpha}{y}e^{-\pi\frac{\alpha}{y}}}\frac{\alpha}{y}
\leq\frac{\frac{3}{2}+16\pi e^{-2\pi}}{1-8\pi e^{-2\pi}}\frac{\alpha}{y}.
\endaligned\end{equation}
In view of \eqref{3.2lem2sigma}$, \epsilon_{2}$ exhibits exponential decay and is monotonically decreasing with respect to $\alpha$ and $y$, hence it can be bounded. Specifically, $\epsilon_{2}\leq 10^{-4}.$ For $\alpha\geq\sqrt{3}$ and $y\geq \frac{\sqrt{3}}{2}$, with \eqref{3.2lem2eq8}-\eqref{3.2lem2eq9}, we have
\begin{equation}\aligned\nonumber
-\frac{\partial^3}{\partial x \partial y^2}\theta(\alpha;z)\geq &2\pi^2{\alpha}^{\frac{3}{2}}y^{\frac{1}{2}}e^{-\pi\alpha y}\big( -\vartheta_{Y}(\frac{y}{\alpha};x)\big)
\Big(1+(\frac{7}{2}-\frac{\frac{3}{2}+16\pi e^{-2\pi}}{1-8\pi e^{-2\pi}})\frac{1}{\pi^2\alpha^2y^2} \\
&-\frac{1}{\pi\alpha y}-\frac{1}{2y^2}-\frac{1}{4\pi y^4}-\epsilon_{2}\Big)\\
\geq& \frac{1}{50}\pi^2{\alpha}^{\frac{3}{2}}y^{\frac{1}{2}}e^{-\pi\alpha y}\big( -\vartheta_{Y}(\frac{y}{\alpha};x)\big)>0.
\endaligned\end{equation}
\end{proof}

The proof of Theorem \ref{Th1} concerning theta function is done by Lemmas \ref{3.1lem3} and \ref{3.2lem2}. The proof of Theorem \ref{Th1} regarding zeta function can be deduced by the result of theta function. In fact, as established by Montgomery \cite{Mon1988} and Luo-Wei \cite{Luo_arxiv_2024-07-08}, there exists a significant relationship between Epstein zeta function and theta function:
\begin{equation}\aligned\label{Th1zeta1}
\zeta(s;z)=\frac{\pi^s}{\Gamma(s)}\int_{0}^{\infty}\big( \theta(\alpha;z)-1 \big)\alpha ^{s-1} d\alpha.
\endaligned\end{equation}
Then taking the mixed partial derivative of both sides of \eqref{Th1zeta1}, we get
\begin{equation}\aligned\label{Th1zeta2}
 \zeta_{xy}(s;z)=\frac{\pi^s}{\Gamma(s)}\int_{0}^{\infty} \theta_{xy}(\alpha;z) \alpha ^{s-1} d\alpha.
\endaligned\end{equation}
\eqref{Th1zeta2} indicates that $\zeta_{xy}(s;z)$ and $\theta_{xy}(\alpha;z)$ are of the same sign. Similarly, $\zeta_{xyy}(s;z)$ maintain the same sign as $\theta_{xyy}(\alpha;z).$ These complete the proof.

\bigskip
{\bf Acknowledgements.}
The research of S. Luo is partially supported by the National Natural Science Foundation of China (NSFC) under Grant Nos. 12261045 and 12001253, and by the Jiangxi Jieqing Fund under Grant No. 20242BAB23001.

\bigskip


\end{document}